\documentclass[12pt,oneside,english]{amsart}
\usepackage[T1]{fontenc}
\usepackage[latin9]{inputenc}
\usepackage{geometry}
\usepackage{color}
\usepackage{textcomp}
\usepackage{mathrsfs}
\usepackage{amstext}
\usepackage{amsthm}
\usepackage{amssymb}
\usepackage{cancel}
\usepackage{graphicx}

\makeatletter
\numberwithin{equation}{section}
\numberwithin{figure}{section}
\theoremstyle{plain}
\newtheorem{thm}{\protect\theoremname}
\theoremstyle{remark}
\newtheorem*{rem*}{\protect\remarkname}
\theoremstyle{remark}
\newtheorem{rem}[thm]{\protect\remarkname}
\theoremstyle{plain}
\newtheorem{prop}[thm]{\protect\propositionname}
\theoremstyle{plain}
\newtheorem{fact}[thm]{\protect\factname}

\makeatother

\usepackage{babel}
\providecommand{\factname}{Fact}
\providecommand{\propositionname}{Proposition}
\providecommand{\remarkname}{Remark}
\providecommand{\theoremname}{Theorem}

\begin{document}
\title{Rethinking the work of Langlands on Eisenstein series}
\author{Devadatta G. Hegde}
\begin{abstract}
Chapter $7$ of Langlands' monograph \emph{On the functional equations
satisfied by Eisenstein series }employs a sophisticated residue scheme
to construct a portion of the discrete automorphic spectrum. We show,
by examples, applications, and heuristics, that this construction
is a straightforward regularization of cuspidal Eisenstein series
at distinguished points, and that the regularization must track \emph{both
the zeros and the poles} of these Eisenstein series. 

Unlike the one-variable case, the zero set and pole set of a several-complex-variable
meromorphic function can intersect at a point. The distinguished points
supporting the discrete spectrum are typically of this kind. The zeros
of cuspidal Eisenstein series --- largely invisible in the rank-one
case --- begin to play a starring role in higher rank situations,
on equal footing with the poles. We redo Langlands' famous $G_{2}$
calculation and show that once zeros are tracked, the calculation
reduces to elementary algebra. 

Drawing on rank-two examples, we introduce a program to re-think Langlands'
construction from first principles, giving zeros and poles of cuspidal
Eisenstein series equal standing from the very beginning. The program
has the advantage of making the underlying phenomenon transparent,
though carrying it out in full generality will require substantial
further work.
\end{abstract}

\maketitle
\tableofcontents{}

\section*{Organization of the notes}

These notes are organized so that each section can be read on its
own, with the $SL(2)$, $SL(3)$, and $G_{2}$ examples (sections
\ref{sec:SL2}, \ref{sec:SL3}, \ref{sec:G2}) forming the technical
core. 

\bigskip{}

Section \ref{sec:new-results} describes two theorems on the poles
of certain Eisenstein series. The first, due to the author \cite{Dattu-thesis},
concerns the poles of degenerate Eisenstein series. The second concerns
the poles of the most general $GL(n)$ Eisenstein series relevant
for spectral decomposition; it can be derived by the methods of the
author's thesis, and is a reformulation, in more natural terms, of
an obscure theorem of Moeglin--Waldspurger \cite{Moeglin-Waldspurger-89}.
Together these results suggest a new theme: that the poles of Eisenstein
series admit simple descriptions in terms of Arthur parameters. Looking
for the origin and generalization of this pattern is what opened the
door to a re-examination of Langlands' construction of the residual
spectrum.

\bigskip{}

Section \ref{sec:SL2} rethinks the basic case of $SL(2,\mathbb{Z})\backslash SL(2,\mathbb{R})/SO(2,\mathbb{R})$.
The goal is to introduce the contour-deformation argument in its simplest
setting and to clarify the relationship between Laurent coefficients
of Eisenstein series at poles and spectral expansions. Carrying out
the deformation under the a priori assumption that the poles need
not be simple, one finds that simplicity is forced: without it, the
deformation does not yield a spectral expansion. This gives a fresh
perspective on the standard result that maximal-parabolic cuspidal
Eisenstein series of reductive groups have simple poles in the right
half-plane, and identifies a relationship --- between Laurent coefficients
and spectral expansions --- whose lessons apply more broadly.

\bigskip{}

Section \ref{sec:SL3} reworks the $SL(3)$ calculation from Langlands'
Boulder paper \cite{Langlands_Boulder}. The goal is to carry out
higher-rank contour deformation in the simplest case, illustrate the
role of zeros of Eisenstein series, and clarify a certain \textquotedbl turbid
lemma\textquotedbl{} of Langlands. We treat this example completely,
including the one-dimensional spectrum, so that the next section can
focus on the discrete spectrum of $G_{2}$ alone. The justification
of the contour deformation requires a certain cancellation; a critical
look at this cancellation, in \S \ref{subsec:Critical-remarks-SL3},
suggests that the locations of the poles of cuspidal Eisenstein series
are sharply restricted.

\bigskip{}

Section \ref{sec:G2} reworks the $G_{2}$ calculation from Appendix
III of \cite{Langlands-Spectral}. Langlands offered the example to
help the reader navigate Chapter 7 of his monograph, and observed
in his commentary that the complications arising in it could not be
eliminated \textquotedbl unless a procedure radically different from
that of the text be found.\textquotedbl{} We carry out the calculation
by tracking both the zeros and the poles of the corresponding Borel
Eisenstein series. With this addition, the calculation proceeds by
elementary algebra. The two-dimensional residual spectrum identified
by Langlands is recovered in the form one expects from the $SL(3)$
analysis of the previous section: as regularizations of the Borel
Eisenstein series at two distinguished points.

\bigskip{}

Section \ref{sec:General} outlines a program for rethinking the \textquotedbl jungle
of Paragraph \S7\textquotedbl{} with equal attention paid to the zeros
and poles of cuspidal Eisenstein series from the outset. 

\newpage{}

\section{\label{sec:new-results}New results on poles of Eisenstein series\protect 
}
\subsection{\label{subsec:deg-Eis-poles}Poles of degenerate Eisenstein series }

\subsubsection*{Notation}

Let
\begin{itemize}
\item $G$ be a connected split semisimple algebraic group over a number
field $F$. 
\item $B=U\rtimes T$ be a fixed Levi decomposition of a fixed Borel subgroup
of $G$. 
\item $P=N\rtimes M$, with $T\subset M$, be the standard Levi decomposition
of a \emph{maximal} parabolic subgroup $P\supset B$ of $G$. 
\item $[G]=G(F)\backslash G(\mathbb{A})$ be the automorphic space.
\item $\delta_{P}:P(\mathbb{A})\to(0,\infty)$ be the modulus character. 
\item $\varpi_{P}$ be the fundamental weight associated with $P$ and 
\[
|\varpi_{P}|:P(\mathbb{A})\to(0,\infty)
\]
be the corresponding character. 
\item $\mathbf{K} = K_\infty \prod_{v<\infty} K_v \subset G(\mathbb{A})$:
a maximal compact subgroup. Here, each $K_v$ at a finite place $v$ is hyperspecial and the Iwasawa decomposition
\[
G(\mathbb{A})=P(\mathbb{A})\mathbf{K}
\]
holds. 
\end{itemize}

\subsubsection*{Degenerate Eisenstein series}

Extend 
\[
\delta_{P},|\varpi_{P}|:G(\mathbb{A})\to(0,\infty)
\]
as $\mathbf{K}$-spherical functions. The degenerate Eisenstein series
\[
E_{P}(s;g)=\sum_{\gamma\in P(F)\backslash G(F)}\left(\delta_{P}^{\frac{1}{2}}\cdot|\varpi_{P}|^{s}\right)(\gamma g)
\]
is the simplest example of an Eisenstein series. This series converges
absolutely for $\Re(s)$ sufficiently large and uniformly on compact
subsets of $[G]$. The map 
\[
s\mapsto E_{P}(s):=E_{P}(s;\bullet)\in C^{\infty}\left([G]\right)
\]
admits meromorphic continuation to $\mathbb{C}$. 

\subsubsection*{Poles}

The poles of $E_{P}(s)$ in the region $\Re(s)\ge0$ are described
via the \emph{principal homomorphism }
\[
\kappa:SL(2,\mathbb{C})\to\widehat{M}
\]
and the adjoint action of $\widehat{M}$ on $\widehat{\mathfrak{n}}=\text{Lie}(\widehat{N})$,
where $\widehat{\bullet}$ denotes the Langlands dual. 

Decompose (Langlands-Shahidi) 
\[
\widehat{\mathfrak{n}}=r_{1}\oplus\cdots\oplus r_{m}
\]
into irreducible representations $r_{j}$ of $\widehat{M}$ indexed
in the standard order. Restricting via $\kappa$, 
\[
r_{j}=\bigoplus_{\ell\ge1}V_{\ell}^{\oplus m_{j}(\ell)},\quad m_{j}(\ell)\in\mathbb{Z}_{\ge0}
\]
where $V_{\ell}$ is the $\ell$-dimensional irreducible representation
of $SL(2,\mathbb{C})$. The following theorem is proved in the author's
doctoral thesis \cite{Dattu-thesis}. 
\begin{thm}
\label{thm:deg-Eis}Define
\[
\mathsf{Q}(s)=\prod_{j=1}^{m}\prod_{\ell\ge1}\left(\frac{js-\frac{\ell+1}{2}}{js+\frac{\ell-1}{2}}\right)^{m_{j}(\ell)}
\]
Then $\mathsf{Q}(s)\cdot E_{P}(s)$ is \textbf{holomorphic and nonzero}
for $\Re(s)\ge0$ (hence the zeros of $\mathsf{Q}$ give the poles
of $E_{P}(s)$, including multiplicities). 
\end{thm}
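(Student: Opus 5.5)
A natural route is to realize $E_{P}(s)$ as a residue of the Borel Eisenstein series and to control its poles through the constant term along $B$.

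\textbf{Step 1: constant term along $B$.} Write $\delta_{P}^{1/2}|\varpi_{P}|^{s}$ restricted to $T$ as the character $\lambda_{s}=s\varpi_{P}+\rho_{P}$, where $\rho_{P}=\rho-\rho_{M}$. Embed the trivial representation of $M$ as the residue at $\rho_{M}$ of the Borel series $E_{B}(\lambda)$ on the affine line $\lambda=\lambda_{s}-\rho_{M}+\rho$; equivalently, $\lambda=s\varpi_{P}+\rho_{M}^{\vee\text{-dual}}$ after normalizing. The constant term of $E_{P}(s)$ along $B$ is then the Langlands formula
\[
E_{P}(s)_{B}=\sum_{w\in W(M)\backslash W,\ \text{suitable}} M(w,\lambda_{s})\,e^{w\lambda_{s}+\rho},
\]
with $M(w,\lambda)=\prod_{\alpha>0,\,w\alpha<0}\xi(\langle\lambda,\alpha^{\vee}\rangle)/\xi(\langle\lambda,\alpha^{\vee}\rangle+1)$, where $\xi$ is the completed Dedekind zeta function of $F$.

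Since $E_{P}(s)$ is concentrated on the residual line, a pole of order $k$ of $E_{P}(s)$ at $s_{0}$ is equivalent to a pole of order $k$ of its constant term. This is the standard fact that square-integrable modulo the cusp condition plus trivial cuspidal components forces $E-\text{(constant term)}$ to be rapidly decreasing. Hence the whole problem reduces to determining the exact pole order of this finite sum of $\xi$-ratios for $\Re s\ge0$.

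\textbf{Step 2: the Langlands--Shahidi shape.} Group the roots in $\mathfrak{n}$ by $\langle\varpi_{P},\alpha^{\vee}\rangle=j$, so that the $j$-th graded piece is $r_{j}$. Along the principal $SL(2)$, the values $\langle\rho_{M},\alpha^{\vee}\rangle$ over the weights of $V_{\ell}\subset r_{j}$ run through $-\frac{\ell-1}{2},\dots,\frac{\ell-1}{2}$. For the longest relevant $w$, the product over those roots telescopes to
\[
\frac{\xi(js-\frac{\ell-1}{2})}{\xi(js+\frac{\ell+1}{2})}.
\]
This is precisely the Langlands--Shahidi computation, and it gives $\prod_{j,\ell}$ of such ratios. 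For $\Re s\ge0$ the denominator is nonvanishing, since its argument has real part $\ge1$. The numerator $\xi(z)$ has poles exactly at $z=0,1$, which occur at $js=\frac{\ell-1}{2}$ and $js=\frac{\ell+1}{2}$; it has no real zeros there, and its complex zeros lie off the real line in $0<\Re z<1$. Because zeros can occur, this explains the rational shape of $\mathsf{Q}$ only after cancellation: the $\frac{\ell-1}{2}$ pole of one factor $V_{\ell}$ cancels against the $\frac{\ell'+1}{2}$ data of the neighbouring strings.

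\textbf{Step 3: the main obstacle.} The hard part is that the terms for different $w$ can have poles that cancel in the sum, so the leading term alone does not determine the order. I would proceed as follows. First, show that all the exponents $w\lambda_{s}$ are distinct for generic $s$. Then, near each candidate $s_{0}$, group the $w$ whose exponents coincide at $s_{0}$ and compute the Laurent expansion of each group. These groups are cosets for a stabilizer, so the relevant sums are $W$-type symmetrizations whose pole order can be computed by an induction on the semisimple rank of the stabilizer. The inductive step reuses the same theorem for smaller Levi subgroups, together with the identity $\xi(z)=\xi(1-z)$.

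I expect the bookkeeping of this cancellation to be the main difficulty. It must show that the total pole order equals $\sum m_{j}(\ell)$ counted over $js_{0}=\frac{\ell+1}{2}$, that is, the zeros of $\mathsf{Q}$; the poles of $\mathsf{Q}$ lie in $\Re s<0$ and so do not interfere. The nonvanishing claim then follows because the leading Laurent coefficient is a nonzero multiple of an exponential that appears in no other group.
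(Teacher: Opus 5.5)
There is a genuine gap, and the first part of it is a concrete error. In Step 3 you say the poles of $\mathsf{Q}$ lie in $\Re s<0$. That is false. The $\ell=1$ factors $\bigl(\frac{js-1}{js}\bigr)^{m_j(1)}$ have poles at $s=0$, which lies in the closed region. So whenever some $r_j$ contains a trivial $SL(2)$-summand, the theorem asserts that $E_P(s)$ \emph{vanishes} at $s=0$ to order exactly $\sum_j m_j(1)$. For $SL(2)$ this is $E(0)=0$, forced by $E(s)=c(s)E(-s)$ with $c(0)=-1$; the Siegel parabolic of $Sp(4)$, where $r_2$ is a line, is another instance. This is precisely the zero phenomenon the paper emphasizes, and your plan never sees it.

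Your nonvanishing argument breaks exactly at this point. At $s=0$ the exponents $w\rho_M$ collide. The Langlands--Shahidi factor $\xi(js)/\xi(js+1)$ tends to $-1$, not $0$, because the denominator also has a pole there. The vanishing therefore comes only from cancellation between Weyl terms. The leading Taylor coefficient is then a polynomial-exponential produced by that cancellation, not ``a nonzero multiple of an exponential that appears in no other group''. The same objection applies at every collision point where $E_P$ is holomorphic: nonvanishing there requires actually computing the group sum, which you do not do.

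The pole count itself is also left open. For $\Re s>0$ the longest-term coefficient $\prod\xi(js-\frac{\ell-1}{2})/\xi(js+\frac{\ell+1}{2})$ has poles at $js=\frac{\ell-1}{2}$ with $\ell\ge2$. Nothing inside the product can cancel them: the denominators are finite and nonzero there, and the numerator zeros are non-real. So your remark about ``neighbouring strings'' is not a valid mechanism. These poles disappear only in the sum over $W$; for $SL(3)$ this happens at $s=\frac12$, i.e.\ at $\varpi_\alpha$. Your induction does not close either. The sums over a stabilizer coset are not degenerate Eisenstein series with trivial data on a smaller Levi, so the theorem is not available as an inductive hypothesis.

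The missing idea, which the paper attributes to the thesis, is to track the zeros of the Borel series together with its poles:
\begin{itemize}
\item Write $E_B=D_BE_B^*$ with $D_B(\Lambda)=\prod_{\delta>0}\frac{\langle\Lambda,\delta^\vee\rangle}{\langle\Lambda,\delta^\vee\rangle-1}$.
\item Realize $E_P(s)$, via an explicit residue formula, as the iterated residue of $E_B$ at $\Lambda=\rho_M+s\varpi_P$. This, not $s\varpi_P+\rho_P$ as in your Step 1, is the Borel parameter, and your $M(w,\cdot)$ must be replaced by its iterated residue.
\item Redo your Step 2 telescoping on $\frac{x}{x-1}$ instead of $\frac{\xi(x)}{\xi(x+1)}$. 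Each $V_\ell\subset r_j$ then contributes $\frac{js+\frac{\ell-1}{2}}{js-\frac{\ell+1}{2}}$, so the iterated residue of $D_B$ is a nonzero constant times $\mathsf{Q}(s)^{-1}$. Compare $\mathsf{Res}_{S_\alpha}D_B=\frac{s+1/2}{s-3/2}$ for $SL(3)$ and the $G_2$ computation along $S_6$.
\end{itemize}
Consequently $\mathsf{Q}(s)E_P(s)$ is a constant multiple of $E_B^*(\rho_M+s\varpi_P)$. Both the spurious poles and the zero at $s=0$ are absorbed automatically by the zero factors $\langle\Lambda,\delta^\vee\rangle$ of $D_B$.

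What remains is the holomorphy and nonvanishing of $E_B^*$ along that line, which the paper defers to the thesis. Note that the line leaves $T_B$ for small $\Re s$: for $SL(3)$, $\langle\rho_M+s\varpi_\beta,\beta^\vee\rangle=s-\tfrac12$. So this step needs more than the stated properties of $E_B^*$ on $T_B$.
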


\subsection{\label{subsec:GLn-poles}Poles of $GL(n)$ Eisenstein series}

Let $(n_{1},\dots,n_{k})$ be a partition of $n=n_{1}+\cdots+n_{k}$,
with $n_{i}=a_{i}d_{i}$ for $a_{i},d_{i}\in\mathbb{Z}_{\ge1}$. 

To show that the above result is not isolated, we describe the poles
of all $GL(n)$ Eisenstein series that play a role in the spectral
decomposition of $GL(n)$ automorphic forms. 
\begin{itemize}
\item $G=GL(n)$ and $P=N\rtimes M$ be the standard Levi decomposition
of the parabolic corresponding to $(n_{1},\dots,n_{k})$. Then 
\[
\widehat{M}=GL(n_{1},\mathbb{C})\times\cdots\times GL(n_{k},\mathbb{C})
\]
 
\item For $a,d\in\mathbb{Z}_{\ge1}$ and $\tau$ a cuspidal representation
of $GL(d,\mathbb{A})$, let $\mathcal{E}(\tau,a)$ be the Speh representation
for $GL(ad,\mathbb{A})$ with Arthur parameter 
\[
A_{\mathcal{E}(\tau,a)}=A_{\tau}\boxtimes V_{a}:\mathscr{L}_{F}\times SL(2,\mathbb{C})\to GL(ad,\mathbb{C})
\]
where $\mathscr{L}_{F}$ is the (conjectural) Langlands group and
$V_{a}$ is the $a$-dimensional irreducible representation of $SL(2,\mathbb{C})$.
Here $A_{\tau}:\mathscr{L}_{F}\to GL(d,\mathbb{C})$ is the (conjectural)
Langlands parameter attached to $\tau$. 
\item Let $\pi\simeq\boxtimes_{i=1}^{k}\mathcal{E}(\tau_{i},a_{i})$ where
$\tau_{i}$ is a cuspidal representation of $GL(d_{i},\mathbb{A})$.
Its Arthur parameter is 
\[
A_{\pi}=\bigoplus_{i=1}^{k}A_{\mathcal{E}(\tau_{i},a_{i})}:\mathscr{L}_{F}\times SL(2,\mathbb{C})\to\widehat{M}
\]
\end{itemize}
The adjoint action gives 
\[
\widehat{\mathfrak{n}}=\bigoplus_{(i,j):1\le i<j\le k}\widehat{\mathfrak{n}_{ij}},
\]
where
\[
\widehat{\mathfrak{n}_{ij}}\simeq\left(A_{\tau_{i}}\boxtimes V_{a_{i}}\right)\otimes\left(A_{\tau_{j}}\boxtimes V_{a_{j}}\right)^{\vee}
\]
\[
\simeq\left(A_{\tau_{i}}\otimes A_{\tau_{j}}^{\vee}\right)\boxtimes\left(V_{a_{i}}\otimes V_{a_{j}}\right).
\]
By Clebsch-Gordan,
\[
V_{a_{i}}\otimes V_{a_{j}}\simeq\bigoplus_{\ell\ge1}V_{\ell}^{\oplus m_{ij}(\ell)}
\]
where $m_{ij}(\ell)\in\mathbb{Z}_{\ge0}$. 

Let $\alpha_{ij}^{\vee}$ denote the coroot associated to the pair
$(i,j)$. For $\Lambda=(s_{1},\dots,s_{k})\in\check{\mathfrak{a}}_{P,\mathbb{C}}$,
set
\[
\langle\Lambda,\alpha_{ij}^{\vee}\rangle:=s_{i}-s_{j}
\]
and define the closed positive tube
\[
T_{P}:=\left\{ \Lambda\in\check{\mathfrak{a}}_{P,\mathbb{C}}:\text{Re}\langle\Lambda,\alpha_{ij}^{\vee}\rangle\ge0\text{ for all }1\le i<j\le k\right\} 
\]
The following theorem can be proved using the argument in \cite{Dattu-thesis}.
It was recently pointed out to me that this theorem is contained in
Lemme III.2 of \cite{Moeglin-Waldspurger-89} in a different formulation,
and that few people seem to have been aware of this result \cite{Hanzer_Muic}. 
\begin{thm}
\label{thm:GL(n)}Let
\[
D_{P}(\pi,\Lambda):=\prod_{\substack{(i,j):1\le i<j\le k\\
\tau_{i}\simeq\tau_{j}
}
}\prod_{\ell\ge1}\left(\frac{\langle\Lambda,\alpha_{ij}^{\vee}\rangle-\frac{\ell+1}{2}}{\langle\Lambda,\alpha_{ij}^{\vee}\rangle+\frac{\ell-1}{2}}\right)^{m_{ij}(\ell)}
\]
The intertwining operator 
\[
D_{P}(\pi,\Lambda)\cdot E_{P}(\pi,\Lambda):\mathrm{Ind}_{P(\mathbb{A})}^{G(\mathbb{A})}(\pi,\Lambda)\to\mathcal{A}([G])
\]
is \textbf{holomorphic and nonzero} for $\Lambda\in T_{P}$.
\end{thm}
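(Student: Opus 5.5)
The plan is the one the paper indicates for Theorem~\ref{thm:deg-Eis}: reduce the analytic behaviour of $E_{P}(\pi,\Lambda)$ on $T_{P}$ to that of its constant terms, hence to intertwining operators, and then identify the poles and zeros of the relevant normalizing factors explicitly.

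\emph{Step 1: constant terms.} Write $\pi=\boxtimes_{i}\mathcal{E}(\tau_{i},a_{i})$ as the iterated residue of a cuspidal Eisenstein series on the Levi $L=\prod_{i}GL(d_{i})^{a_{i}}$. The constant term of $E_{P}(\pi,\Lambda)$ along the parabolic $Q$ with Levi $L$ is then a sum over Weyl elements $w$, indexed by shuffles of the Speh segments. The summands are residues of products of normalized intertwining operators, times ratios of completed Rankin--Selberg $L$-functions $L(s,\tau_{i}\times\tau_{j}^{\vee})$.

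\emph{Step 2: normalizing factors.} Pairs with $\tau_{i}\not\simeq\tau_{j}$ have entire, nonvanishing Rankin--Selberg ratios on the closed tube. This holds by Jacquet--Shalika nonvanishing on $\Re=1$ and the absence of poles, which in turn needs the lower bound $\Re\ge0$ and the fact that twists by $|\det|^{it}$ are absorbed. So only pairs with $\tau_{i}\simeq\tau_{j}$ contribute. For such a pair, the segment of $\tau_{i}$-exponents shifted by $s_{i}$ meets the segment shifted by $s_{j}$, and the product of $\zeta$-type ratios
\[
\prod_{p,q}\frac{L(x_{ij}+p-q,\tau\times\tau^{\vee})}{L(x_{ij}+p-q+1,\tau\times\tau^{\vee})}
\]
telescopes, where $x_{ij}=\langle\Lambda,\alpha_{ij}^{\vee}\rangle$ and $p,q$ run over the centred exponents of $V_{a_{i}}$ and $V_{a_{j}}$. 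The exponents of $V_{a_{i}}\otimes V_{a_{j}}$ are organised exactly by Clebsch--Gordan into strings $V_{\ell}$. Each string telescopes to $L(x_{ij}-\frac{\ell-1}{2})/L(x_{ij}+\frac{\ell+1}{2})$, up to an entire nonvanishing factor on $T_{P}$. Since $L(s,\tau\times\tau^{\vee})$ has simple poles at $s=0,1$ and no zeros on $\Re s\ge1$, each string yields exactly the factor
\[
\left(\frac{x_{ij}+\tfrac{\ell-1}{2}}{x_{ij}-\tfrac{\ell+1}{2}}\right)^{m_{ij}(\ell)}
\]
on $T_{P}$. This is the reciprocal of the corresponding factor of $D_{P}$.

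\emph{Step 3: holomorphy and nonvanishing.} Holomorphy: $D_{P}\cdot E_{P}$ has holomorphic constant terms along every parabolic, by Step 2, provided the normalized local intertwining operators are holomorphic and nonzero on the positive tube. Locally this follows from unitarity of the local components of Speh representations together with the known results of Moeglin--Waldspurger on normalized operators for $GL(n)$. A form with holomorphic constant terms is holomorphic. Nonvanishing: I will take the dominant Weyl element $w_{0}$ for which the $L$-ratio is exactly $D_{P}^{-1}$. Its summand, after multiplying by $D_{P}$, is a nonzero holomorphic operator. It cannot cancel against the other summands, because they carry distinct exponents $w\Lambda$ for generic $\Lambda$. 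At special $\Lambda$ the exponents collide, and I will argue by comparing leading Laurent coefficients along a generic line into the point.

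\emph{Main obstacle.} The hard part is the special points where the zero divisor and the pole divisor of the constant-term factors cross, which is precisely the phenomenon emphasized in the abstract. There, several summands have coinciding exponents and could cancel, and one must check that the multiplicity predicted by $D_{P}$ is neither an overestimate nor an underestimate. I will first try the approach of the thesis argument: induct on $k$, writing $E_{P}$ as an Eisenstein series induced from a maximal parabolic whose inducing data is itself a residue. This reduces each step to a two-block computation, i.e.\ a maximal parabolic $P$ with $k=2$. In that case $T_{P}$ is a half-plane, and the order can be read off the one-variable Laurent expansion exactly as in Theorem~\ref{thm:deg-Eis}.
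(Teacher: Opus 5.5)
\textbf{There is a genuine gap, in Step 2, and Step 3 depends on it.} Your telescoping of $L$-functions is correct: a Clebsch--Gordan string $V_\ell$ contributes $L(x-\frac{\ell-1}{2},\tau\times\tau^\vee)/L(x+\frac{\ell+1}{2},\tau\times\tau^\vee)$. The pole/zero count you read off from it is wrong.

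\emph{What the string factor actually does on $T_P$.} The numerator has poles at \emph{both} $x=\frac{\ell-1}{2}$ and $x=\frac{\ell+1}{2}$. It also has critical zeros in $\frac{\ell-1}{2}<\Re x<\frac{\ell+1}{2}$, so the factor is not ``nonvanishing''. The denominator has a pole in $\Re x\ge 0$ only when $\ell=1$, at $x=0$. There it cancels the numerator's pole instead of producing a zero.

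\emph{Two explicit counterexamples.} The paper's own $SL(3)$ computation is the smallest one ($a_1=2$, $a_2=1$, $\tau$ trivial, $x=s$). There $n_{12}(s)=\frac{\xi(s-\frac12)}{\xi(s+\frac32)}$ is exactly your $\ell=2$ string. It has a pole at the parasitic point $s=\frac12$, yet $\mathsf{Res}_{S_\alpha}E_B=\xi(2)^{-1}E_P(s)$ is holomorphic there. Similarly, take $a_1=a_2=2$ with $\tau$ trivial on $GL(1)$, i.e.\ the $(2,2)$ degenerate series of $GL(4)$. The long-element coefficient is $\frac{\xi(x)\xi(x-1)}{\xi(x+1)\xi(x+2)}$. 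It has a \emph{double} pole at $x=1$ and equals $-1$ at $x=0$, whereas $D_P^{-1}=\frac{x(x+1)}{(x-1)(x-2)}$.

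\emph{Consequences for Step 3.} No summand carries $D_P^{-1}$, and $D_P$ times an individual summand is not holomorphic. The zero of $E_P$ along $x_{ij}=0$ and the simplicity of its poles appear only through cancellation among summands whose exponents $w\Lambda$ collide. Your ``main obstacle'' is therefore the whole content of the theorem, not a boundary case.

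\emph{The fallback does not help.}
The $GL(4)$ example already has $k=2$. Peeling off a maximal parabolic produces inducing data that is an Eisenstein series with a continuous parameter, not a two-block Speh datum. A one-variable Laurent expansion also cannot certify that a vector-valued operator is nonzero at the codimension-two points where pole and zero divisors cross.

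\textbf{The missing idea is the one the paper points to.} Track zeros at the \emph{cuspidal} level, before any residue is taken. Intertwining-operator normalizations see only poles, so they cannot do this.
\begin{itemize}
\item Realize $E_P(\pi,\Lambda)$ via the explicit iterated residue of the cuspidal series $E_Q(\tau,\mu)$ on $\prod_i GL(d_i)^{a_i}$.
\item Factor $E_Q(\tau,\mu)=\prod_{p<q,\,\tau_p\simeq\tau_q}\frac{x_{pq}}{x_{pq}-1}\cdot E_Q^*(\tau,\mu)$ with $E_Q^*$ holomorphic and nonzero. This is the $a_i=1$ case, and it is where Keys--Shahidi enters.
\item Telescope the \emph{rational} factor on the Speh subspace. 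For each string, $\prod_c\frac{x+c}{x+c-1}$ over the weights $c$ of $V_\ell$ collapses to exactly $\frac{x+\frac{\ell-1}{2}}{x-\frac{\ell+1}{2}}$. In the $GL(4)$ example the four cross factors give $\frac{x}{x-1}\cdot\frac{x+1}{x}\cdot\frac{x-1}{x-2}\cdot\frac{x}{x-1}=\frac{x(x+1)}{(x-1)(x-2)}$.
\end{itemize}
The cancellation you would otherwise have to perform by hand among colliding constant-term summands is built into this scalar factor from the start.
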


\subsubsection*{Simultaneous occurrence of zeros and poles of cuspidal Eisenstein
series}

Consider the special case of the above theorem where $\pi=\tau_{1}\boxtimes\cdots\boxtimes\tau_{k}$
is a cuspidal representation of $M(\mathbb{A})$, so all $a_{i}=1$.
Then each Clebsch-Gordan decomposition is $V_{a_{i}}\otimes V_{a_{j}}=V_{1}$
has only the $\ell=1$ term, and theorem \ref{thm:GL(n)} specializes
to the following statement: $E_{P}(\pi,\Lambda)$ has the following
property
\[
\text{pole along }\langle\Lambda,\alpha_{ij}^{\vee}\rangle=1\ \iff\ \text{zero along }\langle\Lambda,\alpha_{ij}^{\vee}\rangle=0\ \iff\ \tau_{i}\simeq\tau_{j}.
\]
The first implication is a result of Keys and Shahidi \cite{Keys-Shahidi};
it plays a fundamental role in the proof of Theorem \ref{thm:GL(n)}
following \cite{Dattu-thesis}. 

In the author's view, this phenomenon --- the simultaneous occurrence
of a zero and a pole of a cuspidal Eisenstein series --- is central
in the theory of Eisenstein series and in spectral decomposition via
contour deformation.

\subsection{Remarks}
\begin{itemize}
\item The formulas in Theorems \ref{thm:deg-Eis} and \ref{thm:GL(n)} appear
to be the first instances of a general pattern in the theory of Eisenstein
series. The key input to prove these theorems is an \emph{explicit}
residue formula that expresses non-cuspidal Eisenstein series as residues
of cuspidal Eisenstein series. If one expects these patterns, occurring
in quite different situations, to hold generally, then one must expect
an explicit residue formula in general. This is the origin of the
present re-examination.
\item The most natural extension of these theorems requires that the contribution
to the discrete spectrum from cuspidal Eisenstein series occur as
a simple regularization. See \ref{subsubsec:disc-examples} for further
examples.
\item The local counterpart of poles of Eisenstein series is the reducibility
of normalized parabolic induction for reductive groups over local
fields. Here, the Arthur parameter should be replaced by the Langlands
parameter.
\end{itemize}
\newpage{}

\section{\label{sec:SL2}The simplest case: $SL(2,\mathbb{Z})\backslash SL(2,\mathbb{R})/SO(2,\mathbb{R})$}

The goal of this section is to elaborate the introductory remarks
that follow. 

Let 
\[
\Gamma=SL(2,\mathbb{Z}),\quad G=SL(2,\mathbb{R}),\quad K=SO(2,\mathbb{R}),
\]
\[
\mathfrak{H}=\left\{ z\in\mathbb{C}:\Im(z)>0\right\} \simeq G/K,
\]
and 
\[
\Delta=-y^{2}(\partial_{x}^{2}+\partial_{y}^{2}).
\]

\subsubsection*{The simplest $SL(2)$-Eisenstein series}

For $z\in\mathfrak{H}$ and $\text{Re}(s)>1$, let
\[
E(s;z)=\frac{1}{2}\sum_{(c,d)\in\mathbb{Z}^{2}:\text{gcd}(c,d)=1}\frac{y^{\frac{1+s}{2}}}{|cz+d|^{1+s}}
\]
The map 
\[
s\mapsto E(s):=E(s;\bullet)\in C^{\infty}(\Gamma\backslash\mathfrak{H})
\]
has a meromorphic continuation to $\mathbb{C}$, and 
\[
\Delta E(s)=\lambda(s)E(s),\quad\lambda(s):=\frac{1-s^{2}}{4}.
\]

\subsubsection*{The standard approach}

Maass--Selberg relation \cite{Garrett1} is used to prove the standard
fact that the poles of $E(s)$ in the region $\Re(s)>0$ are real
and simple. The residues at these poles are $\Delta$-eigenfunctions
in $L^{2}(\Gamma\backslash\mathfrak{H})$ and are picked up by Langlands'
contour deformation. The picture extends to maximal parabolic Eisenstein
series induced from cuspidal data, and is essentially the only case
in which Langlands' \textquotedbl complicated residue scheme\textquotedbl{}
is understood (see \cite{Lapid-Perspectives-pub}, 5.1).

\subsubsection*{Our re-examination}

We propose to read this differently. Rather than taking simplicity
as an external input to the method, we ask what happens without it:
the method of contour deformation fails to produce a spectral expansion.
The rank-one slogan is:
\[
\text{Maass-Selberg}\implies\text{Simplicity of poles of }E(s)\text{ for }\Re(s)>0
\]
\[
\implies\text{contour deformation works.}
\]
The same heuristic controls \S \S \ref{sec:SL3}, \ref{sec:G2}, with one essential change: in
several complex variables, the zero set and pole set of a meromorphic
function can intersect, and the distinguished points contributing
to the spectrum are usually of this kind. The contour deformation
there picks up a regularization of the Eisenstein series --- sensitive
to both zeros and poles.

\subsubsection*{An important remark}

A further constraint: positive-definiteness of $\Delta$ confines
the poles of $E(s)$ in the right half-plane to $(0,1]$, but inductive
arguments at $\text{rank}\ge2$ require that no poles lie in $(0,1)$.
We see this sharpening in \S\ref{subsec:Critical-remarks-SL3}; it
is also the reason the $GL(n)$ \emph{cuspidal} Eisenstein series
of \S\ref{subsec:GLn-poles} had poles only along $\langle\Lambda,\alpha_{ij}^{\vee}\rangle=\mu=1$,
and never $\mu\in(0,1)$. 

\bigskip{}

We take for granted facts peripheral to our re-examination --- in
particular, the discrete decomposition of cuspforms. We pay closer
attention to the first example of a \emph{cuspidal-data pseudo-Eisenstein
series}; the spectral expansion of these, for general reductive groups,
is the subject of Chapter VII of \cite{Langlands-Spectral}. 

\subsection{The space to be spectrally decomposed}

Let

\[
P=\left\{ \begin{pmatrix}* & *\\
0 & *
\end{pmatrix}\right\} \subset G
\]
Define the \emph{Pseudo-Eisenstein series map}
\[
\Psi_{P}:C_{c}^{\infty}(0,\infty)\to C_{c}^{\infty}(\Gamma\backslash\mathfrak{H}),\quad\Psi_{P}(\eta)(z)=\sum_{\gamma\in\Gamma\cap P\backslash\Gamma}\eta(\Im(\gamma z))
\]
 Unlike the Eisenstein series, $\Psi_{P}(\eta)$ does \emph{not}
satisfy a $\Delta$-eigenfunction equation.

Let $\mathscr{D}(0,\infty)^{*}$ be the space of distributions on
$(0,\infty)$ with measure $\frac{dy}{y^{2}}$. The constant term
along $P$ is defined via the \emph{adjoint formula }
\[
C_{P}:L^{2}(\Gamma\backslash\mathfrak{H})\to\mathscr{D}(0,\infty)^{*},
\]
\[
\langle C_{P}f,\eta\rangle_{(0,\infty)}=\langle f,\Psi_{P}(\eta)\rangle_{\Gamma\backslash\mathfrak{H}},\quad\text{for }\eta\in C_{c}^{\infty}(0,\infty)
\]
The map $C_{P}$ is continuous on $L^{2}(\Gamma\backslash\mathfrak{H})$. 

Setting
\[
L_{G}^{2}:=\ker C_{P}\quad\text{and}\quad L_{P}^{2}=\overline{\mathrm{Im}(\Psi_{P})},
\]
the adjoint formula gives the decomposition 
\[
L^{2}(\Gamma\backslash\mathfrak{H})=L_{G}^{2}\oplus L_{P}^{2}
\]

The following is a non-trivial theorem \cite{Godement_cuspforms}
\begin{thm}
\label{thm:disc-decomp} $\Delta$ has a purely discrete spectrum
on $L_{G}^{2}$: there exists an orthonormal basis of $\Delta$-eigenfunctions
for $L_{G}^{2}$. 
\end{thm}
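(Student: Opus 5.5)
The plan is the classical Gelfand--Godement argument: build a compact self-adjoint operator on $L_{G}^{2}$ that commutes with $\Delta$, and apply the spectral theorem. The compactness rests on cuspidality.

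\emph{Operators.} For $\varphi\in C_{c}^{\infty}(G)$ that is bi-$K$-invariant, consider right convolution
\[
(R_{\varphi}f)(z)=\int_{G}\varphi(g)f(zg)\,dg .
\]
Unfolding over $\Gamma$ shows that $R_{\varphi}$ is an integral operator on $\Gamma\backslash\mathfrak{H}$ with kernel
\[
K_{\varphi}(z,w)=\sum_{\gamma\in\Gamma}\varphi(z^{-1}\gamma w).
\]
Bi-$K$-invariant functions are point-pair invariants $k(d(z,w))$. By Selberg's theory every such operator commutes with $\Delta$, and each eigenfunction of $\Delta$ with eigenvalue $\lambda(s)$ is an eigenfunction of $R_{\varphi}$ with eigenvalue $\hat{k}(s)$, the Selberg/Harish-Chandra transform.

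\emph{Preservation of $L_{G}^{2}$.} $R_{\varphi}$ commutes with constant terms, since it acts on the right and $C_{P}$ integrates on the left over $\Gamma\cap N\backslash N$. Equivalently, $R_{\varphi}^{*}$ maps $\mathrm{Im}(\Psi_{P})$ into itself. So $R_{\varphi}$ preserves $L_{G}^{2}$, and it is self-adjoint when $\varphi(g^{-1})=\overline{\varphi(g)}$.

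\emph{Compactness on $L_{G}^{2}$, the main step.} Use a Siegel set $\mathfrak{S}=\{|x|\le\tfrac12,\ y\ge\tfrac{\sqrt3}{2}\}$ and split the kernel by subtracting its $N$-part:
\[
K_{\varphi}(z,w)-\sum_{\gamma\in\Gamma_{\infty}\backslash\Gamma}\int_{\mathbb{R}}\varphi\bigl(z^{-1}n\gamma w\bigr)\,dn .
\]
Subtracting this term does not change the action on $L_{G}^{2}$, because integrating $f$ against it computes $R_{\varphi}$ applied to the constant term, which vanishes. By Poisson summation in $x$, the modified kernel for $z,w$ high in the cusp is a sum over nonzero frequencies of Fourier transforms of a smooth compactly supported function. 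Those transforms decay rapidly, so the modified kernel is bounded by $C_{N}y^{-N}$ on $\mathfrak{S}\times\mathfrak{S}$. It is therefore in $L^{2}$ of $(\Gamma\backslash\mathfrak{H})^{2}$, and $R_{\varphi}|_{L_{G}^{2}}$ is Hilbert--Schmidt. An equivalent route is to show that the image of the unit ball of $L_{G}^{2}$ consists of smooth functions with uniform rapid decay in $y$, via the estimate $|f(z)|\ll_{N}y^{-N}\|f\|$ for cuspidal $f$ in the image, and then apply Arzel\`a--Ascoli. I expect this uniform cusp estimate to be the real obstacle. Its key point is that the constant-term subtraction removes exactly the non-decaying zero-frequency part.

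\emph{Conclusion.} The spectral theorem decomposes $L_{G}^{2}$ into finite-dimensional eigenspaces of $R_{\varphi}$ for nonzero eigenvalues, plus $\ker R_{\varphi}$. Choose a sequence $\varphi_{n}$ forming an approximate identity. Then $\bigcap_{n}\ker R_{\varphi_{n}}=0$, because $R_{\varphi_{n}}f\to f$. So the eigenspaces of the commuting family $\{R_{\varphi}\}$ span a dense subspace.

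Each such eigenspace $V$ is finite-dimensional, and $\Delta$ preserves it: the elements of $V$ are smooth since they lie in the image of $R_{\varphi}$, and $\Delta$ commutes with all $R_{\varphi}$. Thus $\Delta|_{V}$ is a symmetric operator on a finite-dimensional space and diagonalizes there. Collecting orthonormal eigenbases of $\Delta$ on the mutually orthogonal joint eigenspaces gives the desired orthonormal basis of $L_{G}^{2}$.
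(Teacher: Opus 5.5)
Your proposal is correct and is the classical compact-operator argument of Gelfand--Piatetski-Shapiro and Godement, which is exactly what the paper relies on: the paper gives no proof of its own and simply cites Godement's \emph{The spectral decomposition of cusp forms}. Two small points need tidying: first, the subtracted term is left-$N$-invariant but not $\Gamma$-invariant in $z$, so the modified kernel lives on $\mathfrak{F}\times\Gamma\backslash\mathfrak{H}$ with $z$ restricted to a fundamental domain $\mathfrak{F}\subset\mathfrak{S}$ rather than on $(\Gamma\backslash\mathfrak{H})^{2}$, and it then defines a Hilbert--Schmidt operator that agrees with $R_{\varphi}$ on $L_{G}^{2}$; second, eigenspaces of different $R_{\varphi_{n}}$ are not mutually orthogonal, so in the last step you should collect bases of the joint (or $\Delta$-) eigenspaces, which are finite-dimensional and mutually orthogonal.
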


Granting theorem \ref{thm:disc-decomp}, the spectral decomposition
of $L^{2}(\Gamma\backslash\mathfrak{H})$ reduces to decomposing 
\[
L_{P}^{2}=\overline{\left\{ \Psi_{P}(\eta):\eta\in C_{c}^{\infty}(0,\infty)\right\} }
\]
This is the simplest example of a space spanned by a cuspidal-data
pseudo-Eisenstein series. Here the cuspidal datum is the trivial representation
and is not overtly visible. Decomposition of such spaces, in great
generality, is the problem addressed in Chapter VII of Langlands \cite{Langlands-Spectral}. 

\subsection{The Eisenstein series}

Setting $\eta(\Im z)=\Im(z)^{\frac{1+s}{2}}$ formally in
the definition of $\Psi_{P}(\eta)$ recovers the \emph{Eisenstein
series} of the introduction:
\[
E_{P}(s;z)=\sum_{\gamma\in\Gamma\cap P\backslash\Gamma}\Im(\gamma z)^{\frac{1+s}{2}},\quad\Re(s)>1.
\]
We write 
\[
E(s):=E_{P}(s;\bullet)\in C^{\infty}(\Gamma\backslash\mathfrak{H}).
\]
The identity
\[
\Delta\left(\Im(z)^{\frac{1+s}{2}}\right)=\frac{1-s^{2}}{4}\cdot\Im(z)^{\frac{1+s}{2}},\qquad\lambda(s):=\frac{1-s^{2}}{4}
\]
yields 
\[
\Delta E(s)=\lambda(s)E(s)
\]
The function $E(s)$ has a meromorphic continuation to $\mathbb{C}$
and satisfies the functional equation 
\[
E(s)=c(s)E(-s),\quad c(s)=\frac{\xi(s)}{\xi(1+s)}
\]
where $\xi$ is the completed Riemann zeta function. The spectral
condition $\lambda(s)\in[0,\infty)$ holds if and only if $s\in i\mathbb{R}\cup[-1,1]$.
\begin{rem*}
$E(s)$ is treated throughout as a $C^{\infty}(\Gamma\backslash\mathfrak{H})$-valued
meromorphic function. The vanishing of $E(s_{0})$ means $E(s_{0};\bullet)\equiv0$
on $\Gamma\backslash\mathfrak{H}$. 
\end{rem*}

\subsection{\label{subsec:cont}Contour deformation}

For $\eta\in C_{c}^{\infty}(0,\infty)$, the \emph{Mellin transform
}
\[
\widehat{\eta}(s)=\int_{0}^{\infty}\eta(y)y^{-\frac{1+s}{2}}\frac{dy}{y}
\]
is entire of rapid decay on vertical lines, and \emph{Mellin inversion}
gives, for $\sigma\in\mathbb{R}$, 
\[
\eta(y)=\frac{1}{2\pi i}\int_{\sigma+i\mathbb{R}}\widehat{\eta}(s)y^{\frac{1+s}{2}}ds.
\]
Substituting into the definition of $\Psi(\eta):=\Psi_{P}(\eta)$
and exchanging sum and integral (valid for $\sigma>1$): 
\[
\Psi(\eta;z)=\sum_{\gamma\in\Gamma\cap P\backslash\Gamma}\eta(\Im(\gamma z))=\frac{1}{2\pi i}\int_{\sigma+i\mathbb{R}}\widehat{\eta}(s)E(s;z)ds
\]

\emph{This is not yet a spectral expansion}. The condition $\lambda(s)\ge0$
requires a contour shift from $\sigma>1$ to $\sigma=0$, picking
up residues at the poles of $E(s)$ on $(0,1]$: 
\[
\Psi(\eta;z)=\frac{1}{2\pi i}\int_{i\mathbb{R}}\widehat{\eta}(s)E(s;z)ds+\sum_{s_{0}\in(0,1]}\mathsf{Res}_{s=s_{0}}\left(\widehat{\eta}(s)\cdot E(s;z)\right)
\]
The line integral is the \emph{continuous part. }

A direct calculation (\cite{Garrett1}, 1.12) using the functional
equation of $E(s)$ and $|c(s)|=1$ for $s\in i\mathbb{R}$ gives
\[
\widehat{\eta}(s)=\frac{1}{2!}\langle\Psi_{P}(\eta),E(s;\bullet)\rangle\qquad\text{for }s\in i\mathbb{R},
\]
so the line integral can be rewritten as
\[
\frac{1}{2\pi i}\int_{i\mathbb{R}}\widehat{\eta}(s)E(s;z)ds=\frac{1}{4\pi i}\int_{i\mathbb{R}}\langle\Psi,E(s;\bullet)\rangle E(s;z)ds
\]
We next turn to the contribution from the residues. 

\subsection{\label{subsec:Why-simple poles}Why poles must be simple}

Suppose $E(s)$ has a pole of order $N\ge1$ at some $s_{0}\in(0,1]$,
with Laurent expansion
\[
E(s;\bullet)=\frac{E_{-N}(s_{0};\bullet)}{(s-s_{0})^{N}}+\cdots+\frac{E_{-1}(s_{0};\bullet)}{(s-s_{0})}+E_{0}(s_{0};\bullet)+\cdots
\]
The residue at $s_{0}$ picked up in the contour shift is
\[
\mathsf{Res}_{s=s_{0}}\left(\widehat{\eta}(s)E(s;\bullet)\right)=\sum_{n=1}^{N}\frac{\widehat{\eta}^{(n-1)}(s_{0})}{(n-1)!}E_{-n}(s_{0};\bullet)
\]
a linear combination of \emph{all} the negative Laurent coefficients
$E_{-1}(s_{0}),E_{-2}(s_{0}),\dots,E_{-N}(s_{0})$. 

At points of holomorphy, differentiating $\Delta E(s)=\lambda(s)E(s)$
in $s$ gives
\[
\left(\Delta-\lambda(s)\right)\left(\partial_{s}E(s)\right)=\lambda'(s)E(s)\implies\left(\Delta-\lambda(s)\right)^{2}\left(\partial_{s}E(s)\right)=0
\]
Thus, $\partial_{s}E(s)$ is \emph{not}\textbf{ }an eigenvector unless
$E(s)=0$ or $\lambda'(s)=0$ (neither holds for $s\neq0$). Similarly,
one finds that at a pole $s_{0}$ of order $N$ only the leading coefficient
$E_{-N}(s_{0})$ satisfies the $\Delta$-eigenfunction differential
equation 
\[
\Delta E_{-N}(s_{0})=\lambda(s_{0})E_{-N}(s_{0});
\]
the lower coefficients $E_{-n}$ for $n<N$ satisfy 
\[
\left(\Delta-\lambda(s_{0})\right)^{N-n+1}E_{-n}(s_{0})=0,
\]
but not the eigenfunction equation itself. 

The spectral expansion of the self-adjoint operator $\Delta$ on the
Hilbert space $L^{2}(\Gamma\backslash\mathfrak{H})$ admits only functions
satisfying the genuine eigenfunction differential equation. Hence the
poles of $E(s)$ in the interval $s_{0}\in(0,1]$ must be simple.
We emphasize that this is a heuristic argument; the proof of simplicity
uses the Maass--Selberg relation.

\begin{rem}
The argument above is the rank-one shadow of a general principle:
only the leading Laurent coefficient at a pole contributes to a spectral
expansion. In several complex variables, the role of the leading coefficient
is played by a regularization of the Eisenstein series at the distinguished
point. Sections \ref{sec:SL3} and \ref{sec:G2} carry this out in
concrete examples, where the regularization is sensitive to both the
zeros and the poles of the corresponding Eisenstein series.
\end{rem}

\subsection{Spectral decomposition: conclusion}

For $\Re(s)>1$, the only pole of $E(s)$ is at $s=1$, where $\mathsf{Res}_{s=1}E(s;z)$
is a nonzero constant in $z$. Combining the discrete decomposition
of cuspforms, the residue of $E(s)$ at $s=1$, and the continuous
spectrum from \ref{subsec:cont} yields the full spectral expansion:
\begin{thm}
For $f\in C_{c}^{\infty}(\Gamma\backslash\mathfrak{H})$, 
\[
f=\sum_{j}\langle f,\varphi_{j}\rangle\varphi_{j}+\langle f,1\rangle\frac{1}{\langle1,1\rangle}+\frac{1}{4\pi i}\int_{i\mathbb{R}}\langle f,E(s)\rangle E(s)ds,
\]
where $\{\varphi_{j}\}$ is an orthonormal basis of $\Delta$-eigenfunctions
for $L_{G}^{2}$. 
\end{thm}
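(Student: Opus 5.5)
The plan is to split $f$ along $L^{2}(\Gamma\backslash\mathfrak{H})=L_{G}^{2}\oplus L_{P}^{2}$ and treat the two pieces separately. On $L_{G}^{2}$, Theorem \ref{thm:disc-decomp} supplies the orthonormal basis $\{\varphi_{j}\}$, so the cuspidal component of $f$ is $\sum_{j}\langle f,\varphi_{j}\rangle\varphi_{j}$. The remaining work is to identify the projection of $f$ onto $L_{P}^{2}$ with the residual plus continuous terms. I would first do this for $f=\Psi(\eta)$ with $\eta\in C_{c}^{\infty}(0,\infty)$, and then pass to general $f$ by an adjoint argument.

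For $f=\Psi(\eta)$ I would use the contour shift of \S\ref{subsec:cont}:
\[
\Psi(\eta;z)=\frac{1}{2\pi i}\int_{i\mathbb{R}}\widehat{\eta}(s)E(s;z)ds+\mathsf{Res}_{s=1}\left(\widehat{\eta}(s)E(s;z)\right).
\]
By the stated fact, the only pole of $E(s)$ with $\Re(s)>0$ relevant here is the simple pole at $s=1$, so only this residue appears.

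Making the shift legitimate needs two inputs:
\begin{itemize}
\item polynomial growth of $E(s;z)$ in $|\Im s|$, uniformly on $0\le\Re s\le\sigma$ away from $s=1$, for fixed $z$;
\item rapid decay of $\widehat{\eta}$ on vertical strips.
\end{itemize}
Together these let the horizontal segments vanish. The line integral is already $\frac{1}{4\pi i}\int_{i\mathbb{R}}\langle\Psi,E(s)\rangle E(s)ds$ by the identity of \S\ref{subsec:cont}.

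For the residue, the residue $R$ of $E(s)$ at $s=1$ is a constant $c_{1}$, since $\xi$ has a simple pole at $1$. The residue term is therefore $\widehat{\eta}(1)c_{1}$, where $\widehat{\eta}(1)=\int_{0}^{\infty}\eta(y)y^{-2}dy$. Unwinding gives
\[
\langle\Psi(\eta),1\rangle=\int_{\Gamma\cap P\backslash\mathfrak{H}}\eta(y)\frac{dx\,dy}{y^{2}}=\widehat{\eta}(1),
\]
with the $x$-interval of length $1$. The constant $c_{1}$ is fixed either by computing $\mathsf{Res}_{s=1}$ of $\frac{\xi(s)}{\xi(1+s)}$ in the constant term, or more cleanly by self-adjointness: the projection onto constants must be $\langle f,1\rangle/\langle1,1\rangle$. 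The latter forces $c_{1}=1/\langle1,1\rangle$, and I would check this against $\mathrm{vol}=\pi/3$. Also $\Psi(\eta)$ is orthogonal to cusp forms by the adjoint formula, so the formula holds for $f\in\mathrm{Im}\,\Psi_{P}$.

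For general $f\in C_{c}^{\infty}$, write $f=f_{G}+f_{P}$. Pairing, $\langle f_{G},E(s)\rangle=0$ formally, since $E(s)$ for $s\in i\mathbb{R}$ is a limit of pseudo-Eisenstein series. Also $\langle f_{G},1\rangle=0$ because $1$ is a residue of $E$. Hence both sides of the formula are continuous in $f$ for the $L^{2}$ norm, via a Plancherel identity
\[
\|\Psi(\eta)\|^{2}=\frac{|\langle\Psi,1\rangle|^{2}}{\langle1,1\rangle}+\frac{1}{4\pi}\int|\langle\Psi,E(it)\rangle|^{2}dt,
\]
which follows by pairing the expansion with $\Psi(\eta)$ itself. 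By density, the formula extends to $f_{P}\in L_{P}^{2}$, and since $f$ is smooth and compactly supported it holds pointwise.

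The main obstacle is the analytic justification: growth estimates for $E(s)$ in vertical strips, and the passage from an $L^{2}$ isometric identity to pointwise equality. I would handle the former via the explicit Fourier expansion and the functional equation (Phragmén--Lindelöf), and the latter via smoothness of $f$, using $\Delta$-regularity to upgrade $L^{2}$ convergence to uniform convergence on compacta.
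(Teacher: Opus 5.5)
Your architecture is the paper's own: split along $L^{2}_{G}\oplus L^{2}_{P}$, handle cusp forms by Theorem \ref{thm:disc-decomp}, shift the contour for $\Psi(\eta)$ picking up only $s=1$, and symmetrize the line integral with the functional equation. Your density/Plancherel extension to general $f$ usefully supplies what the paper leaves implicit. The step that fails is the one fixing the residual constant. The number $c_{1}=\mathsf{Res}_{s=1}E(s)$ is not yours to choose. Reading it off the constant term gives $c_{1}=\mathsf{Res}_{s=1}\xi(s)/\xi(2)=1/\xi(2)=6/\pi$. But $1/\langle 1,1\rangle=3/\pi$ for the measure $dx\,dy/y^{2}$ that your unfolding $\langle\Psi(\eta),1\rangle=\widehat{\eta}(1)$ uses. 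So the check against $\mathrm{vol}=\pi/3$ that you propose fails by a factor of $2$, and your two suggested ways of computing $c_{1}$ disagree. With the Mellin inversion of \S\ref{subsec:cont} taken literally, the residue term is $2\langle\Psi(\eta),1\rangle/\langle1,1\rangle$, not the projection onto constants. Invoking self-adjointness to ``force'' $c_{1}$ does not repair this; it overwrites an already determined quantity with the answer you want.

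The culprit is the inversion constant. In the variable $w=(1+s)/2$, $\widehat{\eta}(s)=\int_{0}^{\infty}\eta(y)y^{-(1+s)/2}\,dy/y$ is an ordinary Mellin transform, and $dw=\tfrac12\,ds$. So the correct formula is $\eta(y)=\frac{1}{4\pi i}\int_{\sigma+i\mathbb{R}}\widehat{\eta}(s)y^{(1+s)/2}\,ds$. With this fixed, the residue term becomes exactly $\langle\Psi(\eta),1\rangle/\langle1,1\rangle$, but the line integral halves too. Pointwise on $i\mathbb{R}$ one has $\langle\Psi(\eta),E(s)\rangle=\widehat{\eta}(s)+c(-s)\widehat{\eta}(-s)$, and symmetrizing under $s\mapsto-s$ gives $\frac{1}{4\pi i}\int_{i\mathbb{R}}\widehat{\eta}(s)E(s)\,ds=\frac{1}{8\pi i}\int_{i\mathbb{R}}\langle\Psi(\eta),E(s)\rangle E(s)\,ds$. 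So your argument actually proves the expansion with $\frac{1}{8\pi i}$ in place of $\frac{1}{4\pi i}$. Equivalently, this is the familiar $\frac{1}{4\pi i}\int_{\Re w=1/2}\langle f,E(2w-1)\rangle E(2w-1)\,dw$ in the classical variable $w$. Your Plancherel identity likewise needs $\frac{1}{8\pi}$ rather than $\frac{1}{4\pi}$. The stated $\frac{1}{4\pi i}$ would be right only if $\Gamma\backslash\mathfrak{H}$ carried half the measure (volume $\pi/6$), which contradicts your unfolding. The paper's one-line proof inherits the same slip from its Mellin normalization. A legitimate use of self-adjointness would prove independently that the wave packet is orthogonal to constants. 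That pins down only the product of the inversion constant and $c_{1}$, and so would detect this factor of $2$ rather than absorb it.
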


\newpage{}

\section{\label{sec:SL3}Langlands' Boulder example: $SL(3)$}

In \S\ref{sec:SL2} the spectral decomposition was carried out by deforming
a one-dimensional contour. The poles of $E(s)$ in the right half-plane
are simple, all lie on the real axis, and each contributes a single
point residue to the discrete spectrum. In rank two the analogous
problem is qualitatively different. The initial contour
is now a $2$-dimensional \emph{real} integral in a $2$-dimensional
complex space, and its deformation toward the origin proceeds in two
stages (see figure \ref{fig:SL3}): 
\begin{itemize}
\item First, the two-dimensional contour is pushed across polar hyperplanes,
producing a sum of several one-dimensional integrals of residues; 
\item Next, each of these one-dimensional integrals is itself deformed and
may cross further poles, producing point residues.
\end{itemize}
This is the situation Langlands sketched for $SL(3)$ with $P=B$ the
Borel subgroup, in \S10 of Boulder notes \cite{Langlands_Boulder}.
We rework the calculation here, with the following goals.
\begin{itemize}
\item Carry out higher-rank contour deformation in the simplest setting.
The two-stage deformation (Figure \ref{fig:SL3}) is also the template
for the $G_{2}$ calculation \S\ref{sec:G2}. 
\item Explain Langlands' ``rather turbid Lemma 7.4'' of \cite{Langlands-Spectral}.
The lemma manifests here as a $3\times3$ matrix of \emph{rank-one}
and is related to the functional equations satisfied by residues of
the Borel Eisenstein series. 
\item Labesse \cite{Labesse_book}, the most recent expositor, writes: ``Moreover,
as will be seen in the $GL(3)$ example ..., many parasitic residues
do cancel. A direct argument, like the one we shall use in examples,
would not be tractable in the general case and a subtle induction
process is necessary.'' We shall see that there is nothing parasitic
about what is happening, once the \emph{zeros} of Eisenstein series
are taken into account.
\item In \S\ref{subsec:Critical-remarks-SL3} we observe that justifying
the $SL(3)$ contour deformation with the required analytic estimates
rests on a cancellation in the constant-term coefficients which, when
read carefully, sharply constrains the location of the poles of the
$SL(2)$ Eisenstein series $E(s)$ to $s=1$. This basic fact seems
to have been overlooked in the literature.
\end{itemize}

\subsection{Notation and Setup}

Let $G=SL(3,\mathbb{R})$, $\Gamma=SL(3,\mathbb{Z})$, $K=SO(3,\mathbb{R})$,
and $B=U\rtimes T$ be the Borel subgroup of upper-triangular matrices,
with $U$ its unipotent radical and $T$ its diagonal torus. Let

\[
A=\left\{ \text{diag}(a_{1},a_{2},a_{3}):a_{i}>0,\ a_{1}a_{2}a_{3}=1\right\} ,
\]
\[
T^{1}=\left\{ \text{diag}(\varepsilon_{1},\varepsilon_{2},\varepsilon_{3}):\varepsilon_{i}\in\{\pm1\},\ \varepsilon_{1}\varepsilon_{2}\varepsilon_{3}=1\right\} ,
\]
so that
\[
T=T^{1}\times A.
\]
Let
\[
\mathfrak{a}=\left\{ (x,y,z)\in\mathbb{R}^{3}:x+y+z=0\right\} ,
\]
\[
\log:A\to\mathfrak{a},\quad\text{diag}(a_{1},a_{2},a_{3})\mapsto(\log a_{1},\log a_{2},\log a_{3}).
\]
In the Langlands decomposition $G=UT^{1}AK$, the map 
\[
\mathsf{a}:G\to A,\quad\mathsf{a}(g=utak)=a
\]
is well-defined; we set 
\[
H:G\to\mathfrak{a},\quad H(g=utak)=\log(a).
\]

\subsection{Roots, coroots, and weights}

For $\mathbf{t}=\text{diag}(t_{1},t_{2},t_{3})\in T$, the positive
roots are
\[
\alpha(\mathbf{t})=t_{1}/t_{2},\quad\beta(\mathbf{t})=t_{2}/t_{3},\quad\gamma(\mathbf{t})=\alpha(\mathbf{t})\beta(\mathbf{t})=t_{1}/t_{3},
\]
with the set of simple roots $\alpha,\beta$. The corresponding coroots
are the one-parameter subgroups
\[
\alpha^{\vee}(t)=\text{diag}(t,t^{-1},1),\ \beta^{\vee}(t)=\text{diag}(1,t,t^{-1}),\ \gamma^{\vee}(t)=\text{diag}(t,1,t^{-1}).
\]
The pairing $\langle\delta_{1},\delta_{2}^{\vee}\rangle$ is defined
by $\delta_{1}\circ\delta_{2}^{\vee}(t)=t^{\langle\delta_{1},\delta_{2}^{\vee}\rangle}$,
giving
\[
\langle\delta,\delta^{\vee}\rangle=2\quad\text{for all positive roots }\delta>0,
\]
and 
\[
\langle\alpha,\beta^{\vee}\rangle=\langle\beta,\alpha^{\vee}\rangle=-1.
\]
The fundamental weights 
\[
\varpi_{\alpha}(\mathbf{t})=t_{1},\quad\varpi_{\beta}(\mathbf{t})=t_{1}t_{2}
\]
form a basis dual to $\left(\alpha^{\vee},\beta^{\vee}\right)$. Using
additive notation for roots, the half-sum of positive roots is 
\[
\rho=\frac{1}{2}(\alpha+\beta+\gamma)=\varpi_{\alpha}+\varpi_{\beta}.
\]
We choose the measure $d\Lambda$ on $\check{\mathfrak{a}}$ so that
the lattice generated by the fundamental weights is of covolume $1$;
in coordinates 
\[
\Lambda=s_{\alpha}\varpi_{\alpha}+s_{\beta}\varpi_{\beta}.
\]

The weyl group $W=S_{3}$ acts on $\mathfrak{a}$, and hence on $\check{\mathfrak{a}}$
and $\check{\mathfrak{a}}_{\mathbb{C}}:=\check{\mathfrak{a}}\otimes_{\mathbb{R}}\mathbb{C}$. 

\subsection{The space to be spectrally decomposed }

In direct analogy with \S\ref{sec:SL2}, define the pseudo-Eisenstein
series map
\[
\Psi_{B}:C_{c}^{\infty}(A)\to C_{c}^{\infty}(\Gamma\backslash G)^{K},\quad\Psi_{B}(\eta)=\sum_{\gamma\in B\cap\Gamma\backslash\Gamma}\eta\left(\mathsf{a}(\gamma g)\right)
\]
Let 
\[
L_{B}^{2}=\overline{\left\{ \Psi_{B}(\eta):\eta\in C_{c}^{\infty}(A)\right\} }\subset L^{2}(\Gamma\backslash G)^{K}
\]

\subsubsection*{Adjoint relation}

We have the adjoint formula
\[
\langle\Psi_{B}(\eta),f\rangle_{\Gamma\backslash G/K}=\langle\eta,C_{B}f\rangle_{U\backslash G/K},
\]
where the \emph{constant term }$C_{B}$ along $B$ is 
\[
C_{B}(f)(g)=\int_{\Gamma\cap U\backslash U}f(ug)du
\]
Note that $U\backslash G/K\approx A$, but the measure on $A$ is
not its Haar measure (in rank-one, it was $dy/y^{2}$ and not $dy/y$). 

\subsection{Mellin transform and the integral representation for $\Psi_{B}(\eta)$}

(Quasi-)characters on $A$ are parameterized by $\check{\mathfrak{a}}_{\mathbb{C}}$:
for $\Lambda\in\check{\mathfrak{a}}_{\mathbb{C}}$,
\[
\chi_{\Lambda}:A\to\mathbb{C}^{\times},\quad\chi(a)=e^{\langle\Lambda,H(a)\rangle}.
\]

The Mellin transform 
\[
\widehat{\eta}(\Lambda)=\int_{A}\eta(a)e^{\langle\rho-\Lambda,H(a)\rangle}da,
\]
of $\eta$ is an entire function on $\check{\mathfrak{a}}_{\mathbb{C}}$,
and it is of rapid decay along imaginary planes $\sigma+i\check{\mathfrak{a}}$
for any $\sigma\in\check{\mathfrak{a}}$. The \emph{Mellin inversion}
formula is 
\[
\eta(a)=\frac{1}{(2\pi i)^{\dim\mathfrak{a}}}\int_{\sigma+i\check{\mathfrak{a}}}\widehat{\eta}(\Lambda)e^{\langle\rho+\Lambda,H(a)\rangle}d\Lambda
\]
for any $\sigma\in\check{\mathfrak{a}}$. Cauchy's theorem implies
that the integral is independent of $\sigma.$

The Borel Eisenstein series
\[
E_{B}(\Lambda;g)=\sum_{\gamma\in B\cap\Gamma\backslash\Gamma}e^{\langle\rho+\Lambda,H(\gamma g)\rangle},
\]
converges absolutely for $\Lambda$ in the tube
\[
\left\{ \Lambda\in\check{\mathfrak{a}}_{\mathbb{C}}:\langle\Re(\Lambda),\alpha^{\vee}\rangle>1\text{ and }\langle\Re(\Lambda),\beta^{\vee}\rangle>1\right\} .
\]
Using the Mellin inversion formula, we have 
\begin{equation}
\Psi_{B}(\eta;g)=\frac{1}{(2\pi i)^{\dim\mathfrak{a}}}\int_{\sigma+i\check{\mathfrak{a}}}\widehat{\eta}(\Lambda)E_{B}(\Lambda;g)d\Lambda\label{eq:int-rep}
\end{equation}
for $\sigma\in\check{\mathfrak{a}}$ such that the Eisenstein
series converges absolutely. 

\subsection{Properties of $E_{B}$}

As a $C^{\infty}(\Gamma\backslash G)^{K}$-valued function of $\Lambda$,
$E_{B}$ admits a meromorphic continuation to $\check{\mathfrak{a}}_{\mathbb{C}}$:
\[
E_{B}:\check{\mathfrak{a}}_{\mathbb{C}}\to C^{\infty}(\Gamma\backslash G)^{K},\quad\Lambda\mapsto E_{B}(\Lambda)
\]
It has the following properties: 
\begin{itemize}
\item \emph{Functional equations.}\textbf{ }For each $w\in W$, 
\begin{equation}
E_{B}(\Lambda)=c(w,\Lambda)E_{B}(w\Lambda),\quad c(w,\Lambda)=\prod_{\delta>0:w\delta<0}\frac{\xi(\langle\Lambda,\delta^{\vee}\rangle)}{\xi(1+\langle\Lambda,\delta^{\vee}\rangle)}\label{eq:fun-eq}
\end{equation}
where $\xi$ is the completed Riemann zeta function. We have the cocycle
relation: 
\[
c(ww',\Lambda)=c(w,w'\Lambda)\cdot c(w',\Lambda).
\]
\item \emph{Constant term along $B$} is 
\begin{equation}
C_{B}E_{B}(\Lambda;g):=\int_{U\cap\Gamma\backslash U}E_{B}(ug)du=\sum_{w\in W}c(w,\Lambda)e^{\langle\rho+w\Lambda,H(g)\rangle}.\label{eq:const-term}
\end{equation}
\emph{ }
\item \emph{Regularization}\textbf{ $E_{B}^{*}$. }Let 
\[
D_{B}(\Lambda)=\prod_{\delta>0}\frac{\langle\Lambda,\delta^{\vee}\rangle}{\langle\Lambda,\delta^{\vee}\rangle-1}
\]
and define $E_{B}^{*}$ by the relation
\[
E_{B}(\Lambda)=:D_{B}(\Lambda)E_{B}^{*}(\Lambda)
\]
The function $E_{B}^{*}$ is never zero and is holomorphic on the
closed positive tube 
\[
T_{B}=\left\{ \Lambda\in\check{\mathfrak{a}}_{B,\mathbb{C}}:\Re(\langle\Lambda,\delta^{\vee}\rangle)\ge0\quad\forall\ \delta>0\right\} .
\]
Equivalently, $D_{B}$ exhibits all the zeros of $E_{B}$ and those
poles of $E_{B}$ that intersect $T_{B}$ (see \cite{Dattu-thesis}). 
\end{itemize}
For a positive root $\delta>0$, write
\[
S_{\delta}:=\left\{ \Lambda\in\check{\mathfrak{a}}_{\mathbb{C}}:\langle\Lambda,\delta^{\vee}\rangle=1\right\} 
\]
for the polar hyperplane and 
\[
H_{\delta}:=\left\{ \Lambda\in\check{\mathfrak{a}}_{\mathbb{C}}:\langle\Lambda,\delta^{\vee}\rangle=0\right\} 
\]
for the corresponding zero hyperplane. Note that $S_{\delta}=\delta/2+H_{\delta}$. 
\begin{rem}
The poles of $E_{B}^{*}$ outside $T_{B}$ are related
to the Riemann hypothesis, just as in the rank-one case. We will return
to this important issue in \S\ref{subsec:Critical-remarks-SL3}.
\end{rem}

\subsection{The Contour deformation }

The deformation of the contour 
\[
\Psi_{B}(\eta;g)=\frac{1}{(2\pi i)^{\dim\mathfrak{a}}}\int_{\sigma_{0}+i\check{\mathfrak{a}}}\widehat{\eta}(\Lambda)E_{B}(\Lambda;g)d\Lambda
\]
towards the origin is illustrated in Figure \ref{fig:SL3} and proceeds
in two stages.

\begin{figure}
\includegraphics[scale=0.25]{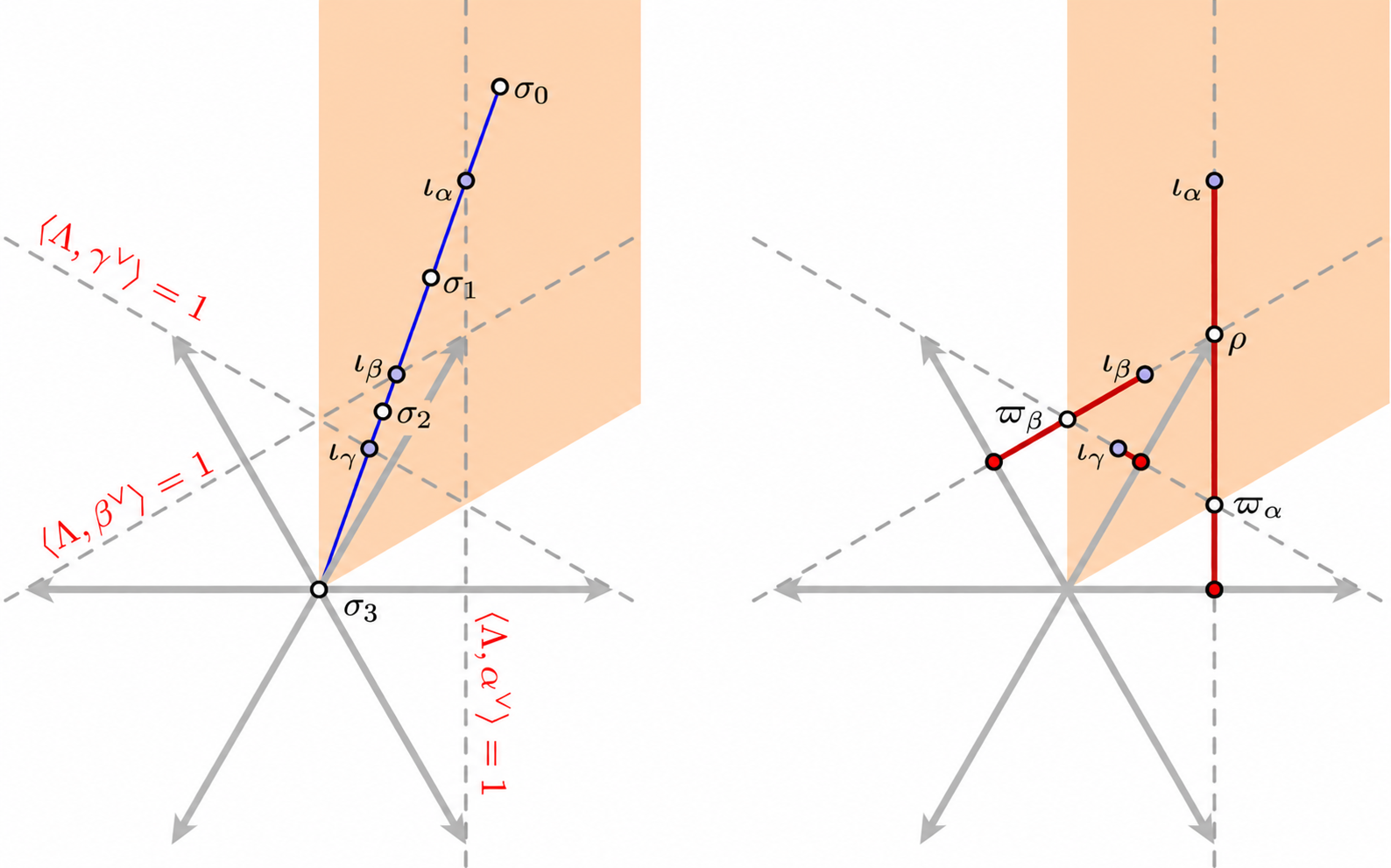}

\caption{\label{fig:SL3}Two-stage contour deformation for $SL(3)$ (courtesy
of Bill Casselman). \emph{Left:} the two-dimensional contour is pushed
from $\sigma_{0}$ along the blue path to $\sigma_{3}=0$, crossing
each polar hyperplane $S_{\delta}$ at $\iota_{\delta}$. \emph{Right:}
each one-dimensional residue integral is deformed (red paths) toward
the real point $\delta/2\in S_{\delta}$, possibly crossing the point
poles $\rho=S_{\alpha}\cap S_{\beta}$, $\varpi_{\alpha}=S_{\alpha}\cap S_{\gamma}$,
$\varpi_{\beta}=S_{\beta}\cap S_{\gamma}$. }
\end{figure}

\begin{itemize}
\item \emph{Stage I. }Push the two-dimensional contour from $\sigma_{0}$
along the blue path in Figure \ref{fig:SL3} to the origin $\sigma_{3}=0$.
This path crosses each polar hyperplane $S_{\delta}$ for $\delta>0$
once, at a point $\iota_{\delta}$. By Cauchy's theorem,
\[
\Psi_{B}(\eta;g)=\underbrace{\frac{1}{(2\pi i)^{\dim\mathfrak{a}}}\int_{i\check{\mathfrak{a}}}\widehat{\eta}(\Lambda)E_{B}(\Lambda;g)d\Lambda}_{\text{2D-continuous spectrum}}+\sum_{\delta>0}I(\iota_{\delta},\eta),
\]
where $I(\iota_{\delta},\eta)$ is the residue along $S_{\delta}$,
a one-dimensional contour integral on $S_{\delta}\cap\left\{ \iota_{\delta}+i\check{\mathfrak{a}}\right\} $. 
\item \emph{Stage II. }For each $\delta>0$, deform $I(\iota_{\delta},\eta)$
along the red path in the right-hand panel of Figure \ref{fig:SL3}
from its position over $\iota_{\delta}$ down to a one-dimensional
contour $I(\delta/2,\eta)$ over the real point $\delta/2$. The deformation
may cross point poles of the integrand: only the points
\[
\rho=S_{\alpha}\cap S_{\beta},\quad\varpi_{\alpha}=S_{\alpha}\cap S_{\gamma},\quad\varpi_{\beta}=S_{\beta}\cap S_{\gamma}
\]
are visible from this picture as candidate point-spectrum contributors.
But see \S\ref{subsec:Critical-remarks-SL3}.
\end{itemize}
The two stages decompose $\Psi_{B}(\eta)$ as
\[
\Psi_{B}(\eta)=\mathcal{C}_{2}(\eta)+\mathcal{C}_{1}(\eta)+\mathcal{C}_{0}(\eta),
\]
where $\mathcal{C}_{r}$ is the $r$-dimensional spectrum. We treat
$\mathcal{C}_{2}$ in \S\ref{subsec:Stage-I}. 

The most interesting new phenomena in this case come from $\mathcal{C}_{1}$
and are treated in \S\ref{subsec:Stage-II}. 

The discrete spectrum contribution $\mathcal{C}_{0}$ is treated in
\S\ref{subsec:parasite-SL3}, where the punchline is that two of the
three putative point contributions vanish!

\subsection{\label{subsec:Stage-I}Stage I: the two-dimensional spectrum}

For $\Lambda\in i\check{\mathfrak{a}}$, we have
\[
|c(w,\Lambda)|=1\implies\overline{c(w,\Lambda)}=c(w,\Lambda)^{-1}
\]
This fails if $\Re(\Lambda)\neq0$. From the adjoint formula and the constant
term formula for $C_{B}E_{B}$, we have ($\Lambda\in i\check{\mathfrak{a}}$)
\[
\langle\Psi_{B}(\eta),E_{B}(\Lambda)\rangle=\langle\eta,C_{B}E_{B}(\Lambda)\rangle
\]
\[
=\sum_{w\in W}\widehat{\eta}(w\Lambda)\overline{c}(w,\Lambda)=\sum_{w\in W}\widehat{\eta}(w\Lambda)c(w,\Lambda)^{-1}.
\]

\begin{prop}
\label{prop:Fourier-coefficient-2D}For $\eta\in C_{c}^{\infty}(A)$,
\[
\mathcal{C}_{2}(\eta)=\frac{1}{(2\pi i)^{2}}\int_{i\check{\mathfrak{a}}}\widehat{\eta}(\Lambda)E_{B}(\Lambda)d\Lambda
\]
\[
=\frac{1}{|W|}\frac{1}{(2\pi i)^{\dim\mathfrak{a}}}\int_{i\check{\mathfrak{a}}}\langle\Psi_{B}(\eta),E_{B}(\Lambda)\rangle E_{B}(\Lambda)d\Lambda.
\]
\end{prop}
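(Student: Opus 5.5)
The plan is to substitute the formula for $\langle\Psi_{B}(\eta),E_{B}(\Lambda)\rangle$ derived just before the statement into the right-hand side, and then use the functional equations (\ref{eq:fun-eq}) together with the $W$-invariance of $i\check{\mathfrak{a}}$ and of $d\Lambda$ to collapse the $|W|$ resulting terms into one. The first equality is simply the definition of $\mathcal{C}_{2}$ from Stage I, so only the second equality needs proof.

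Concretely, for $\Lambda\in i\check{\mathfrak{a}}$ we have
\[
\langle\Psi_{B}(\eta),E_{B}(\Lambda)\rangle E_{B}(\Lambda)=\sum_{w\in W}\widehat{\eta}(w\Lambda)\,c(w,\Lambda)^{-1}E_{B}(\Lambda).
\]
By (\ref{eq:fun-eq}), $c(w,\Lambda)^{-1}E_{B}(\Lambda)=E_{B}(w\Lambda)$, so each summand equals $\widehat{\eta}(w\Lambda)E_{B}(w\Lambda)$. Since $W$ acts on $i\check{\mathfrak{a}}$ by linear isometries, the change of variables $\Lambda\mapsto w^{-1}\Lambda$ preserves $d\Lambda$ (with absolute value of the determinant equal to $1$) and maps $i\check{\mathfrak{a}}$ to itself. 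Each of the $|W|$ integrals therefore equals $\int_{i\check{\mathfrak{a}}}\widehat{\eta}(\Lambda)E_{B}(\Lambda)\,d\Lambda$. Dividing by $|W|$ gives the claim.

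The main obstacle is justifying these manipulations on the imaginary plane. The factors $c(w,\Lambda)$ and $E_{B}(\Lambda)$ have singularities along the walls $H_{\delta}\cap i\check{\mathfrak{a}}$: for instance $\xi(\langle\Lambda,\delta^{\vee}\rangle)$ has a pole at $\langle\Lambda,\delta^{\vee}\rangle=0$. One must therefore check that the functional equation holds pointwise off a measure-zero set, and that $\widehat{\eta}(\Lambda)E_{B}(\Lambda)$ is locally integrable there. For this I would invoke the regularization $E_{B}=D_{B}E_{B}^{*}$ with $E_{B}^{*}$ holomorphic on $T_{B}\supset i\check{\mathfrak{a}}$. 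Near the walls $D_{B}$ in fact vanishes, since $E_{B}$ has zeros along $H_{\delta}$, so $E_{B}$ is holomorphic on all of $i\check{\mathfrak{a}}$. Off the walls, $|c(w,\Lambda)|=1$ makes $c(w,\Lambda)^{-1}=\overline{c(w,\Lambda)}$ legitimate, which was already used in the displayed computation.

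Finally, absolute convergence of the integrals follows from the rapid decay of $\widehat{\eta}$ on $i\check{\mathfrak{a}}$ against the moderate growth of $E_{B}(\Lambda;g)$ in $\Lambda$, locally uniformly in $g$. This growth bound is the one already implicit in Stage I, and I would take it as given.
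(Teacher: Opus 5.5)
Your proposal is correct and is the paper's argument: substitute $\langle\Psi_B(\eta),E_B(\Lambda)\rangle=\sum_{w}\widehat{\eta}(w\Lambda)c(w,\Lambda)^{-1}$, use $c(w,\Lambda)^{-1}E_B(\Lambda)=E_B(w\Lambda)$, change variables by $w$, and average over $W$; the paper simply runs the same computation in the opposite direction. One small correction to your justification paragraph: $c(w,\Lambda)$ is not singular on the walls $H_\delta\cap i\check{\mathfrak{a}}$. Each factor $\xi(t)/\xi(1+t)$ has cancelling poles at $t=0$ (value $-1$), and $\xi$ has no zeros on $\Re s=1$, so $c(w,\cdot)$ is holomorphic and unimodular on all of $i\check{\mathfrak{a}}$ and no exceptional null set is needed.
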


\begin{proof}
Change the variable $\Lambda\to w\Lambda$ in the integral and use
$E_{B}(w\Lambda)=c(w,\Lambda)^{-1}E_{B}(\Lambda)$: 
\[
\frac{1}{(2\pi i)^{2}}\int_{i\check{\mathfrak{a}}}\widehat{\eta}(\Lambda)E_{B}(\Lambda)d\Lambda=\frac{1}{(2\pi i)^{2}}\int_{i\check{\mathfrak{a}}}\widehat{\eta}(w\Lambda)E_{B}(w\Lambda)d\Lambda
\]
Averaging over $W$ gives the claim. 
\end{proof}
\begin{rem}
Note that we identified $\widehat{\eta}$ as a multiple of $\langle\Psi_{B}(\eta),E_{B}\rangle$
only on the imaginary plane $i\check{\mathfrak{a}}$. This is why
the contour must be deformed all the way to $i\check{\mathfrak{a}}$
before a \emph{spectral expansion} appears. The same phenomenon occurred
at rank one in \S\ref{sec:SL2}.
\end{rem}

\subsection{On residue calculations}

Before we evaluate the one-dimensional residues, we record two preliminaries.

\subsubsection*{(a) The decomposition of measure}

For a positive root $\delta$, choose $\nu$ in $\check{\mathfrak{a}}$
satisfying $\langle\nu,\delta^{\vee}\rangle=1$. We have 
\[
\check{\mathfrak{a}}=H_{\delta}\oplus\mathbb{R}\nu\quad\text{and}\quad d\Lambda=d\Lambda_{H}dx
\]
for a unique $d\Lambda_{H}$ (independent of $\nu$), where $x=\langle\Lambda,\delta^{\vee}\rangle$.
We parameterize 
\[
S_{\delta}=\left\{ \frac{\delta}{2}+sv_{\delta}:s\in\mathbb{C}\right\} ,
\]
choosing 
\begin{equation}
v_{\alpha}=\varpi_{\beta},\quad v_{\beta}=\varpi_{\alpha},\quad v_{\gamma}=\varpi_{\alpha}-\varpi_{\beta}.\label{eq:parameterization-of-planes}
\end{equation}
A short check confirms that $\langle v_{\delta},\delta^{\vee}\rangle=0$
so that $H_{\delta}=\left\{ sv_{\delta}:s\in\mathbb{C}\right\} $. 

\subsubsection*{(b) Residue along a polar hyperplane.}

If a meromorphic function $F$ on $\check{\mathfrak{a}}_{\mathbb{C}}$
has at most a simple pole along $S_{\delta}$, then 
\[
\mathsf{Res}_{S_{\delta}}F:=\left(\langle\Lambda,\delta^{\vee}\rangle-1\right)F\big\vert_{S_{\delta}}
\]
is a meromorphic function on $S_{\delta}$. The contribution from
crossing $S_{\delta}$ in stage I above is
\[
\frac{1}{2\pi i}\int_{\sigma_{\delta}+i\mathbb{R}}\mathsf{Res}_{S_{\delta}}(\widehat{\eta}\cdot E_{B})\left(\frac{\delta}{2}+sv_{\delta}\right)ds,
\]
where $\sigma_{\delta}\in\mathbb{R}$ is characterized by $\iota_{\delta}=\delta/2+\sigma_{\delta}v_{\delta}$. 

\subsection{\label{subsec:Stage-II}Stage II: the one-dimensional spectrum}

The one-dimensional contribution is
\begin{equation}
\mathcal{C}_{1}(\eta)=\sum_{\delta>0}\frac{1}{2\pi i}\int_{\text{Re}(s)=0}\left(\widehat{\eta}\cdot\mathsf{Res}_{S_{\delta}}E_{B}\right)\left(\frac{\delta}{2}+sv_{\delta}\right)\cdot ds\label{eq:one-D}
\end{equation}
Three different residual Eisenstein series appear. We now show that
they are not independent: they are scalar multiples of one another,
with the scalars themselves residues of constant-term coefficients.

\subsubsection*{Functional equations among residues}

For $w\in W$, the functional equation $E_{B}(\Lambda)=c(w,\Lambda)E_{B}(w\Lambda)$
relates residues along different hyperplanes. Choose $w$ to map $S_{\alpha}$
onto $S_{\beta}$; rotation by $2\pi/3$ does this. The function $c(w,\Lambda)$
is holomorphic along $S_{\alpha}$ (since $\alpha$ maps to $\beta>0$)
and therefore 
\[
\left(\mathsf{Res}_{S_{\alpha}}E_{B}\right)(\alpha/2+sv_{\alpha})=\left[\mathsf{Res}_{S_{\alpha}}c(w,\bullet)E_{B}(w\cdot\bullet)\right](\alpha/2+sv_{\alpha})
\]
\[
=\left(c(w,\bullet)\cdot\mathsf{Res}_{S_{\alpha}}E_{B}(w\cdot\bullet)\right)(\alpha/2+sv_{\alpha})
\]
\[
=\frac{\xi(s-1/2)}{\xi(s+3/2)}\left(\mathsf{Res}_{S_{\beta}}E_{B}\right)(\beta/2-sv_{\beta})
\]
Similarly, taking $w$ to be the reflection defined by $\beta$, one
finds that 
\[
\left(\mathsf{Res}_{S_{\alpha}}E_{B}\right)(\alpha/2+sv_{\alpha})=\frac{\xi(s-1/2)}{\xi(s+1/2)}\left(\mathsf{Res}_{S_{\gamma}}E_{B}\right)(\gamma/2-sv_{\gamma})
\]
Using $\xi(s)=\xi(1-s)$, we get
\[
\begin{vmatrix}\frac{\xi(it-1/2)}{\xi(it+3/2)}\end{vmatrix}=1=\begin{vmatrix}\frac{\xi(it-1/2)}{\xi(it+1/2)}\end{vmatrix}\qquad(t\in\mathbb{R})
\]
just as in the rank-one case functional equation. 

\subsubsection*{Lemma 7.4 of Langlands \cite{Langlands-Spectral}}

Index the three positive roots by $i=1,2,3$ for $\alpha,\beta,\gamma$
respectively, so that $\delta_{1}=\alpha$, $\delta_{2}=\beta$, $\delta_{3}=\gamma$
and $S_{1}=S_{\delta_{1}}$ etc. Let $\sigma_{ij}\in W$ be the unique
element sending $\delta_{i}$ to $-\delta_{j}$. Each residue $\mathsf{Res}_{S_{i}}E_{B}$
has constant term along $B$ given, after taking residues termwise
in equation \ref{eq:const-term}, by
\[
\left(C_{B}\mathsf{Res}_{S_{i}}E_{B}\right)(\delta_{i}/2+sv_{i})=\frac{1}{\xi(2)}\sum_{j=1}^{3}n_{ij}(s)\exp\langle\rho+\sigma_{ij}(\delta_{i}/2+sv_{i}),H(g)\rangle.
\]
Here, 
\[
\frac{1}{\xi(2)}n_{ij}(s)=\left(\mathsf{Res}_{S_{i}}c(\sigma_{ij},\bullet)\right)(\delta_{i}+sv_{i})
\]
The $3\times3$ matrix $[n_{ij}(s)]$ has rank one due to the
functional equations above relating the residues of $E_{B}$ along
$S_{i}$. Explicitly, 
\[
[n_{ij}(s)]=\begin{pmatrix}1 & \frac{\xi\left(s-\frac{1}{2}\right)}{\xi\left(s+\frac{3}{2}\right)} & \frac{\xi\left(s+\frac{1}{2}\right)}{\xi\left(s+\frac{3}{2}\right)}\\
\frac{\xi\left(-s-\frac{1}{2}\right)}{\xi\left(-s+\frac{3}{2}\right)} & 1 & \frac{\xi\left(-s+\frac{1}{2}\right)}{\xi\left(-s+\frac{3}{2}\right)}\\
\frac{\xi\left(-s+\frac{1}{2}\right)}{\xi\left(-s+\frac{3}{2}\right)} & \frac{\xi\left(s+\frac{1}{2}\right)}{\xi\left(s+\frac{3}{2}\right)} & \frac{\xi\left(s+\frac{1}{2}\right)}{\xi\left(s+\frac{3}{2}\right)}\frac{\xi\left(-s+\frac{1}{2}\right)}{\xi\left(-s+\frac{3}{2}\right)}
\end{pmatrix}
\]
Note that 
\[
n_{ij}(s)=n_{ji}(-s)
\]

\subsubsection*{Degenerate Eisenstein series}

Let $P$ and $Q$ be respectively the $(2,1)$ and $(1,2)$ standard
maximal parabolic subgroups. For $R\in\{P,Q\}$, let
\[
E_{R}(s;g)=\sum_{\gamma\in R\cap\Gamma\backslash\Gamma}\delta_{R}^{\frac{1}{2}+\frac{s}{3}}(\gamma g),\quad\Re(s)>\frac{3}{2}
\]
Here $\delta_{R}:R\to(0,\infty)$ is the Haar measure modulus character
extended to $G$ via Iwasawa decomposition $G=RK$. The map 
\[
s\mapsto E_{R}(s;\bullet)\in C^{\infty}(\Gamma\backslash G)^{K}
\]
has a meromorphic continuation to $\mathbb{C}$. One can show that
\[
\left(\mathsf{Res}_{S_{\alpha}}E_{B}\right)(\alpha/2+sv_{\alpha})=\frac{1}{\xi(2)}E_{P}(s)
\]
\[
\left(\mathsf{Res}_{S_{\beta}}E_{B}\right)(\beta/2+sv_{\beta})=\frac{1}{\xi(2)}E_{Q}(s)
\]

\subsubsection*{Conclusion: the one-dimensional spectrum}

We can now collapse the three terms in $\mathcal{C}_{1}(\eta)$. 
\begin{prop}
For $\eta\in C_{c}^{\infty}(A)$ and $s\in i\mathbb{R}$, we have
\[
\mathcal{C}_{1}(\eta)=\frac{1}{2\pi i}\frac{1}{\xi(2)}\int_{i\mathbb{R}}\langle\Psi_{B}(\eta),E_{P}(s)\rangle E_{P}(s)ds.
\]
\end{prop}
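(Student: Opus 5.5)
Starting from \eqref{eq:one-D}, I will write $\mathcal{C}_{1}(\eta)$ as a sum of three line integrals, one for each $\delta\in\{\alpha,\beta,\gamma\}$, and use the functional equations among residues to rewrite every integrand in terms of $E_{P}(s)$, i.e.\ in terms of $\mathsf{Res}_{S_{\alpha}}E_{B}$.

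\textbf{Step 1: express each residue through $E_{P}$.} By the identity $\left(\mathsf{Res}_{S_{\alpha}}E_{B}\right)(\alpha/2+sv_{\alpha})=\xi(2)^{-1}E_{P}(s)$, the $\alpha$-term is already $\frac{1}{2\pi i}\xi(2)^{-1}\int_{i\mathbb{R}}\widehat{\eta}(\alpha/2+sv_{\alpha})E_{P}(s)\,ds$. For the $\beta$- and $\gamma$-terms, I substitute $s\mapsto -s$ (which preserves $i\mathbb{R}$). The two displayed functional equations then give
\[
\left(\mathsf{Res}_{S_{\beta}}E_{B}\right)(\beta/2-sv_{\beta})=n_{12}(s)^{-1}\xi(2)^{-1}E_{P}(s),
\qquad
\left(\mathsf{Res}_{S_{\gamma}}E_{B}\right)(\gamma/2-sv_{\gamma})=\tfrac{\xi(s+1/2)}{\xi(s-1/2)}\,\xi(2)^{-1}E_{P}(s).
\]
These hold on $i\mathbb{R}$, where the $\xi$-ratios have modulus one and are finite.

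\textbf{Step 2: compute $\langle\Psi_{B}(\eta),E_{P}(s)\rangle$.} For $s\in i\mathbb{R}$ I use the adjoint formula, the residue $\xi(2)^{-1}E_{P}(s)=\mathsf{Res}_{S_{\alpha}}E_{B}$, and the constant-term expression via Lemma 7.4 with $i=1$:
\[
\langle\Psi_{B}(\eta),\xi(2)^{-1}E_{P}(s)\rangle=\xi(2)^{-1}\sum_{j=1}^{3}\overline{n_{1j}(s)}\,\widehat{\eta}\bigl(-\overline{\sigma_{1j}(\alpha/2+sv_{\alpha})}\bigr).
\]
Here the Mellin variable must be matched carefully against the conjugation conventions. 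On $i\mathbb{R}$ one has $\overline{n_{1j}(s)}=n_{1j}(s)^{-1}$, and I will verify that $-\overline{\sigma_{1j}(\alpha/2+sv_\alpha)}=\delta_{j}/2\mp sv_{j}$, with the sign matching Step 1. This gives
\[
\langle\Psi_{B}(\eta),E_{P}(s)\rangle=\widehat{\eta}(\alpha/2+sv_{\alpha})+n_{12}(s)^{-1}\widehat{\eta}(\beta/2-sv_{\beta})+n_{13}(s)^{-1}\widehat{\eta}(\gamma/2-sv_{\gamma}).
\]
I will check that $n_{13}(s)^{-1}$ agrees with the $\gamma$-ratio found in Step 1. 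This is where the rank-one property of $[n_{ij}]$ (Lemma 7.4) enters, through consistency of the cocycle relations.

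\textbf{Step 3: compare.} Summing the three rewritten integrals from Step 1 gives exactly
\[
\frac{1}{2\pi i}\xi(2)^{-1}\int_{i\mathbb{R}}\langle\Psi_{B}(\eta),E_{P}(s)\rangle E_{P}(s)\,ds,
\]
which is the claim. Note the absence of a $1/|W|$-type averaging factor: each of the three hyperplanes contributes one summand of the inner product, rather than being counted multiple times.

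\textbf{Main obstacle.} The hard part is the bookkeeping in Step 2. I must identify precisely the points $\sigma_{1j}(\alpha/2+sv_{\alpha})$ in the parametrizations \eqref{eq:parameterization-of-planes}, track complex conjugation and the $\rho$-shift in the Mellin pairing, and confirm that the coefficients produced by the adjoint formula are exactly the reciprocals of the functional-equation factors from Step 1 (possibly up to the sign reversal $s\mapsto -s$, using $n_{ij}(s)=n_{ji}(-s)$). A secondary point is justifying that the residues along $S_{\delta}$ have at most simple poles and are holomorphic on the contour $\Re(s)=0$, so that the substitutions are legitimate. The Stage II deformation itself, and any point residues it crosses, are assumed handled separately in $\mathcal{C}_{0}$.
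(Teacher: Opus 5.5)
Your strategy matches the paper's proof. You rewrite the three line integrals in terms of $\mathsf{Res}_{S_{\alpha}}E_{B}=\xi(2)^{-1}E_{P}$, compute $\langle\Psi_{B}(\eta),E_{P}(s)\rangle$ from the adjoint formula and the first row of $[n_{ij}]$, and match coefficients. That matching is the whole content of the proof, and as you set it up it fails for $\gamma$.

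\textbf{The coefficient for $\gamma$.} The identity $\overline{n_{1j}(s)}=n_{1j}(s)^{-1}$ on $i\mathbb{R}$ is false for $j=3$. Indeed $n_{13}(it)=\xi(\tfrac12+it)/\xi(\tfrac32+it)$ vanishes whenever $\tfrac12+it$ is a zero of $\xi$, so it is not unimodular. Your announced check therefore fails: $n_{13}(s)^{-1}=\xi(s+\tfrac32)/\xi(s+\tfrac12)$ is not $\xi(s+\tfrac12)/\xi(s-\tfrac12)$.

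What is true, and what the paper's proof verifies, is $\overline{n_{1j}(s)}=c_{\alpha,\delta_j}(s)^{-1}$. This follows directly from $\overline{\xi(z)}=\xi(\bar z)$ and $\xi(z)=\xi(1-z)$:
\[
\overline{n_{13}(it)}=\frac{\xi(\tfrac12-it)}{\xi(\tfrac32-it)}=\frac{\xi(\tfrac12+it)}{\xi(-\tfrac12+it)}.
\]
No unitarity is involved. Your $\gamma$-ratio $\xi(s+\tfrac12)/\xi(s-\tfrac12)$ is also not of modulus one on $i\mathbb{R}$, so that remark in Step 1 is wrong too. The rank-one structure only gives $n_{13}=c_{\alpha,\gamma}\,n_{33}$ with $n_{33}\neq 1$. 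What actually makes the matching work is $n_{33}(it)=|c_{\alpha,\gamma}(it)|^{-2}$.

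\textbf{The sign for $\gamma$.} Since $s_\beta\varpi_\beta=\varpi_\beta-\beta=v_\gamma$, one has $s_\beta(\tfrac\alpha2+sv_\alpha)=\tfrac\gamma2+sv_\gamma$. So the $\gamma$-functional equation holds with $\mathsf{Res}_{S_\gamma}E_B(\tfrac\gamma2+sv_\gamma)$; the displayed $-sv_\gamma$ is a slip, and the matrix $[n_{ij}]$ is consistent only with the $+$ sign. Likewise $\sigma_{13}(\tfrac\alpha2+sv_\alpha)=-\tfrac\gamma2+sv_\gamma$, so the adjoint formula produces $\widehat\eta(\tfrac\gamma2+sv_\gamma)$.

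Hence the $\gamma$-integral must \emph{not} be reflected. Only the $\beta$-integral needs $s\mapsto -s$, and there your bookkeeping is right: $\sigma_{12}(\tfrac\alpha2+sv_\alpha)=-(\tfrac\beta2+sv_\beta)$ and $\overline{n_{12}}=n_{12}^{-1}$. As written, the $\gamma$-arguments in your Steps 1 and 2 would disagree.

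\textbf{Corrected computation.} With both repairs,
\[
\langle\Psi_B(\eta),E_P(s)\rangle=\widehat\eta(\tfrac\alpha2+sv_\alpha)+\frac{\xi(s+\tfrac32)}{\xi(s-\tfrac12)}\,\widehat\eta(\tfrac\beta2-sv_\beta)+\frac{\xi(s+\tfrac12)}{\xi(s-\tfrac12)}\,\widehat\eta(\tfrac\gamma2+sv_\gamma).
\]
This is exactly the factor multiplying $\xi(2)^{-1}E_P(s)$ in the rewritten $\mathcal{C}_1(\eta)$, and the proposition follows as in the paper.
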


\begin{proof}
Setting 
\[
c_{\alpha,\alpha}(s)=1,\ c_{\alpha,\beta}(s)=\frac{\xi(s-1/2)}{\xi(s+3/2)},\ c_{\alpha,\gamma}(s)=\frac{\xi(s-1/2)}{\xi(s+1/2)},
\]
a change of variable gives 
\[
\mathcal{C}_{1}(\eta)=\frac{1}{2\pi i}\int_{\text{Re}(s)=0}\left(\sum_{\delta>0}\widehat{\eta}\left(\frac{\delta}{2}+sv_{\delta}\right)c_{\alpha,\delta}^{-1}(s)\right)\mathsf{Res}_{S_{\alpha}}E_{B}\left(\frac{\alpha}{2}+sv_{\alpha}\right)ds.
\]
The adjoint formula gives
\[
\langle\Psi_{B}(\eta),\mathsf{Res}_{S_{\alpha}}E_{B}\left(\alpha/2+sv_{\alpha}\right)\rangle=\frac{1}{\xi(2)}\left(\sum_{j=1}^{3}\widehat{\eta}\left(\frac{\delta_{j}}{2}+sv_{j}\right)\overline{n_{1j}(s)}\right)
\]
Looking up the matrix $[n_{ij}(s)]$, one verifies 
\[
\overline{n_{1j}(s)}=c_{\alpha,\delta_{j}}(s)^{-1}
\]
on $s\in i\mathbb{R}$. The result follows. 
\end{proof}

\subsection{\label{subsec:parasite-SL3}The discrete spectrum: parasitic residues
unmasked}

We turn to the point-spectrum contribution $\mathcal{C}_{0}(\eta)$.
As noted in the preamble, Figure \ref{fig:SL3} suggests three contributions,
at
\[
\rho=S_{\alpha}\cap S_{\beta},\quad\varpi_{\alpha}=S_{\alpha}\cap S_{\gamma},\quad\varpi_{\beta}=S_{\beta}\cap S_{\gamma}
\]
We now show that only $\rho$ contributes; the other two yield zero.

The integrand of the deformed contour along $S_{\alpha}$ is 
\[
\mathsf{Res}_{S_{\alpha}}\left(\widehat{\eta}\cdot E_{B}\right)=\left(\widehat{\eta}\cdot E_{B}^{*}\right)\big\vert_{S_{\alpha}}\cdot\mathsf{\mathsf{Res}}_{S_{\alpha}}D_{B}
\]
We compute
\[
\left(\mathsf{Res}_{S_{\alpha}}D_{B}\right)(\alpha/2+sv_{\alpha})=(\langle\Lambda,\alpha^{\vee}\rangle-1)\prod_{\delta>0}\frac{\langle\Lambda,\delta^{\vee}\rangle}{\langle\Lambda,\delta^{\vee}\rangle-1}\big\vert_{\Lambda=\alpha/2+sv_{\alpha}}
\]
\[
=\frac{\cancel{s-\frac{1}{2}}}{s-\frac{3}{2}}\cdot\frac{s+\frac{1}{2}}{\cancel{s-\frac{1}{2}}}=\frac{s+\frac{1}{2}}{s-\frac{3}{2}}.
\]
Thus $\mathsf{Res}_{S_{\alpha}}D_{B}$ has no pole at the point $\text{\ensuremath{\varpi_{\alpha}}}=S_{\alpha}\cap S_{\gamma}$!
We summarize the above calculation in the proposition below, which
addresses the ``parasitic'' issue raised by Labesse. This already
shows the starring role of the zeros of Eisenstein series $E_{B}$. 
\begin{prop}
For any $\eta\in C_{c}^{\infty}(A)$, the function $\mathsf{Res}_{S_{\alpha}}\left(\widehat{\eta}\cdot E_{B}\right)$
is holomorphic at $\varpi_{\alpha}\in S_{\alpha}$ and the function
$\mathsf{Res}_{S_{\beta}}\left(\widehat{\eta}\cdot E_{B}\right)$
is holomorphic at $\varpi_{\beta}\in S_{\beta}$. 
\end{prop}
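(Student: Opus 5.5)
The plan is to use the factorization $E_{B}=D_{B}\cdot E_{B}^{*}$ and push the whole question onto the explicit rational function $D_{B}$. Since $\widehat{\eta}$ is entire, and $E_{B}^{*}$ is holomorphic (and nonzero) on the closed positive tube $T_{B}$, the function $\widehat{\eta}\cdot E_{B}^{*}$ is holomorphic near every point of $T_{B}$. It therefore has no pole along $S_{\alpha}$, and
\[
\mathsf{Res}_{S_{\alpha}}\left(\widehat{\eta}\cdot E_{B}\right)=\left(\widehat{\eta}\cdot E_{B}^{*}\right)\big\vert_{S_{\alpha}}\cdot\mathsf{Res}_{S_{\alpha}}D_{B},
\]
as displayed just before the proposition. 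The first step is to check that $\varpi_{\alpha}$ lies in $T_{B}$. This is immediate: $\langle\varpi_{\alpha},\alpha^{\vee}\rangle=1$, $\langle\varpi_{\alpha},\beta^{\vee}\rangle=0$ and $\langle\varpi_{\alpha},\gamma^{\vee}\rangle=1$ are all $\ge0$. Because $T_{B}$ is only a closed tube, I would apply the holomorphy statement quoted from \cite{Dattu-thesis} in its natural form, holomorphy on a neighbourhood of $T_{B}$. This is the reading under which the paper's own residue manipulation above makes sense.

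The second step is to compute $\mathsf{Res}_{S_{\alpha}}D_{B}$ on the parametrization $\Lambda=\alpha/2+sv_{\alpha}$ with $v_{\alpha}=\varpi_{\beta}$. The three pairings are
\[
\langle\Lambda,\alpha^{\vee}\rangle=1,\qquad\langle\Lambda,\beta^{\vee}\rangle=s-\tfrac{1}{2},\qquad\langle\Lambda,\gamma^{\vee}\rangle=s+\tfrac{1}{2}.
\]
These give
\[
\mathsf{Res}_{S_{\alpha}}D_{B}=\frac{s-\frac12}{s-\frac32}\cdot\frac{s+\frac12}{s-\frac12}=\frac{s+\frac12}{s-\frac32}.
\]
The point $\varpi_{\alpha}=S_{\alpha}\cap S_{\gamma}$ corresponds to $s=\frac12$, since $\langle\Lambda,\gamma^{\vee}\rangle=1$ exactly when $s=\tfrac12$. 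At that value the pole coming from the $\gamma$-factor of $D_{B}$ is cancelled by the zero of the $\beta$-factor. This is the zero of $E_{B}$ along $H_{\beta}$, which passes through $\varpi_{\alpha}$ because $\langle\varpi_{\alpha},\beta^{\vee}\rangle=0$. So $\mathsf{Res}_{S_{\alpha}}D_{B}$ is holomorphic at $s=\tfrac12$, and so is the product.

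The statement at $\varpi_{\beta}$ follows by the symmetric computation, interchanging $\alpha$ and $\beta$ with $v_{\beta}=\varpi_{\alpha}$. Alternatively one can invoke the diagram automorphism of $SL(3)$, which swaps $\alpha$ and $\beta$ and fixes $\gamma$. Here the zero along $H_{\alpha}$ cancels the pole along $S_{\gamma}$ at $\varpi_{\beta}$, and $\varpi_{\beta}\in T_{B}$ by the same check.

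The main obstacle is the first step: justifying that $E_{B}^{*}$ is genuinely holomorphic in a full complex neighbourhood of the boundary point $\varpi_{\alpha}$, and not merely on $T_{B}$ itself. If that is in doubt, I would fall back on a direct argument. I would write $E_{B}^{*}=E_{B}/D_{B}$ and show that the only singular hyperplanes of $E_{B}$ through $\varpi_{\alpha}$ are $S_{\alpha}$ and $S_{\gamma}$, each simple. This can be read off from the constant term \eqref{eq:const-term}, since $\xi$ has poles only at $0$ and $1$ and $\xi(1+\cdot)$ has no zeros near these real points. One then uses that $E_{B}$ vanishes along $H_{\beta}$; this follows from the $\beta$-reflection functional equation, whose coefficient $\xi(x)/\xi(1+x)$ has a zero at $x=0$. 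That gives the required cancellation without appealing to the global theorem.
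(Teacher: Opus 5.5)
Your main argument is the paper's own: factor $E_{B}=D_{B}E_{B}^{*}$, compute $\mathsf{Res}_{S_{\alpha}}D_{B}=\frac{s+\frac12}{s-\frac32}$, and observe that at $s=\frac12$ (the point $\varpi_{\alpha}$) the zero along $H_{\beta}$ cancels the pole along $S_{\gamma}$; the symmetric computation handles $\varpi_{\beta}$. One correction to your optional fallback: $\xi(x)/\xi(1+x)$ equals $-1$ at $x=0$, not $0$, because $\xi(0)$ and $\xi(1)$ are both poles, so the vanishing of $E_{B}$ on $H_{\beta}$ comes instead from $E_{B}(\Lambda)=-E_{B}(s_{\beta}\Lambda)=-E_{B}(\Lambda)$ there (the same fact is what makes $H_{\beta}$ non-polar).
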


\emph{The contribution from $\rho$. }We compute
\[
\mathsf{Res}_{\rho}\mathsf{Res}_{S_{\alpha}}(\widehat{\eta}\cdot E_{B})=\widehat{\eta}(\rho)E_{B}^{*}(\rho)\cdot\mathsf{Res}_{s=\frac{3}{2}}\frac{s+\frac{1}{2}}{s-\frac{3}{2}}=2\widehat{\eta}(\rho)E_{B}^{*}(\rho).
\]
One computes using the explicit formula for $C_{B}E_{B}$ that
\[
2C_{B}E_{B}^{*}(\rho)=\frac{1}{\xi(2)\xi(3)}
\]
and
\[
\langle\Psi_{B}(\eta),1\rangle=\widehat{\eta}(\rho)
\]
Thus, 
\[
2\widehat{\eta}(\rho)E_{B}^{*}(\rho)=\langle\Psi_{B}(\eta),1\rangle\frac{1}{\xi(2)\xi(3)}
\]
Combining the results for $\mathcal{C}_{2}(\eta),\ \mathcal{C}_{1}(\eta)$,
and $\mathcal{C}_{0}(\eta)$, we have
\begin{thm}
For $\eta\in C_{c}^{\infty}(A)$, we have
\[
\Psi_{B}(\eta)=\langle\Psi_{B}(\eta),1\rangle\frac{1}{\xi(2)\xi(3)}+\frac{1}{2\pi i}\frac{1}{\xi(2)}\int_{i\mathbb{R}}\big\langle\Psi_{B}(\eta),E_{P}(s)\big\rangle E_{P}(s)ds
\]
\[
+\frac{1}{|W|}\frac{1}{(2\pi i)^{2}}\int_{i\check{\mathfrak{a}}_{B}}\langle\Psi_{B}(\eta),E_{B}(\Lambda)\rangle E_{B}(\Lambda)d\Lambda
\]
\end{thm}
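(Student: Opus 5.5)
The plan is to assemble the three pieces of the decomposition $\Psi_{B}(\eta)=\mathcal{C}_{2}(\eta)+\mathcal{C}_{1}(\eta)+\mathcal{C}_{0}(\eta)$ obtained from the two-stage contour deformation. I start from the integral representation (\ref{eq:int-rep}) with $\sigma_{0}$ in the cone of absolute convergence.

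\emph{Stage I.} I push the two-dimensional contour along the blue path of Figure \ref{fig:SL3} to $i\check{\mathfrak{a}}$. The only singular hyperplanes met in the closed positive region are the $S_{\delta}$, and they are met with at most simple poles. This follows from the factorization $E_{B}=D_{B}E_{B}^{*}$, with $E_{B}^{*}$ holomorphic on $T_{B}$, together with the fact that the path stays inside $T_{B}$. The rapid decay of $\widehat{\eta}$ on vertical planes, combined with polynomial growth of $E_{B}$ in imaginary directions away from the poles, justifies Cauchy's theorem. The remaining two-dimensional integral is $\mathcal{C}_{2}(\eta)$, and Proposition \ref{prop:Fourier-coefficient-2D} rewrites it as
\[
\frac{1}{|W|}\frac{1}{(2\pi i)^{2}}\int_{i\check{\mathfrak{a}}}\langle\Psi_{B}(\eta),E_{B}(\Lambda)\rangle E_{B}(\Lambda)\,d\Lambda .
\]
Crossing each $S_{\delta}$ produces the one-dimensional integrals $I(\iota_{\delta},\eta)$ of $\mathsf{Res}_{S_{\delta}}(\widehat{\eta}E_{B})$ over $\sigma_{\delta}+i\mathbb{R}$.

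\emph{Stage II.} I shift each of these to $\mathrm{Re}(s)=0$ in the parameterization $\delta/2+sv_{\delta}$. On $S_{\delta}$ the integrand is $(\widehat{\eta}E_{B}^{*})|_{S_{\delta}}\cdot\mathsf{Res}_{S_{\delta}}D_{B}$, and $E_{B}^{*}$ is holomorphic on the relevant closed region. So the only candidate poles are the poles of the rational function $\mathsf{Res}_{S_{\delta}}D_{B}$, and computing these is elementary:
\begin{itemize}
\item on $S_{\alpha}$ it equals $(s+\tfrac12)/(s-\tfrac32)$, with a pole only at $\rho$;
\item on $S_{\beta}$ the computation is the same by symmetry, with a pole only at $\rho$;
\item on $S_{\gamma}$ the candidate points are $\varpi_{\alpha}$ and $\varpi_{\beta}$. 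There the factors $\langle\Lambda,\alpha^{\vee}\rangle$ and $\langle\Lambda,\beta^{\vee}\rangle$ in the numerator of $D_{B}$ must be checked against the poles. I would verify directly that the zero of one factor cancels the pole of the other, exactly as in the $S_{\alpha}$ computation.
\end{itemize}
Hence the point-residue contribution $\mathcal{C}_{0}$ comes only from $\rho$. I also need to check which side of $\rho$ the Stage II paths start from, to fix signs and orientation. Then $\rho$ is crossed by the $S_{\alpha}$ and $S_{\beta}$ integrals (or only one of them, depending on the path), and the preceding computation gives $2\widehat{\eta}(\rho)E_{B}^{*}(\rho)$. Since $E_{B}^{*}(\rho)$ is a $\Delta$-eigenfunction with eigenvalue $0$ in $L^{2}$, it is constant. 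Using $2C_{B}E_{B}^{*}(\rho)=1/(\xi(2)\xi(3))$ and $\langle\Psi_{B}(\eta),1\rangle=\widehat{\eta}(\rho)$, this contribution is $\langle\Psi_{B}(\eta),1\rangle/(\xi(2)\xi(3))$. The one-dimensional integrals on $i\mathbb{R}$ are then collapsed to the single $E_{P}$ term by the preceding proposition on $\mathcal{C}_{1}$, which uses the rank-one matrix $[n_{ij}]$ and the identification $\mathsf{Res}_{S_{\alpha}}E_{B}=E_{P}/\xi(2)$.

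\emph{Main obstacle.} I expect the hard step to be the bookkeeping at $\rho$ and the analytic justification of Stage II. For the bookkeeping, I must decide whether both the $S_{\alpha}$ and $S_{\beta}$ integrals cross $\rho$. If they do, the functional equation relating $\mathsf{Res}_{S_{\alpha}}E_{B}$ and $\mathsf{Res}_{S_{\beta}}E_{B}$ must be used so that the total is a single copy of $\langle\Psi_{B}(\eta),1\rangle/(\xi(2)\xi(3))$ rather than double. I would first fix the path geometry in Figure \ref{fig:SL3} so that only $S_{\alpha}$ crosses $\rho$, and otherwise compare the two residues via $c(w,\cdot)$ at $\rho$. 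For the justification, the horizontal segments need growth bounds on $E_{B}^{*}$ in strips. For these I would rely on the standard estimates for $\xi$ ratios in the constant term, together with rapid decay of $\widehat{\eta}$.
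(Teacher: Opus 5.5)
Your proposal is correct and is essentially the paper's argument: $\mathcal{C}_{2}$ comes from Proposition~\ref{prop:Fourier-coefficient-2D}, $\mathcal{C}_{1}$ comes from the rank-one matrix $[n_{ij}(s)]$ together with $\mathsf{Res}_{S_{\alpha}}E_{B}=E_{P}/\xi(2)$, and $\mathcal{C}_{0}$ comes from $E_{B}=D_{B}E_{B}^{*}$, where the rational residues $\mathsf{Res}_{S_{\delta}}D_{B}$ rule out $\varpi_{\alpha},\varpi_{\beta}$ and $\rho$ yields $2\widehat{\eta}(\rho)E_{B}^{*}(\rho)=\langle\Psi_{B}(\eta),1\rangle/(\xi(2)\xi(3))$ (your extra check on $S_{\gamma}$ is harmless, since $\mathsf{Res}_{S_{\gamma}}D_{B}\equiv1$). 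Your loose ends close as follows: the blue path leaves the quadrant $\{s_{\alpha}>1,\ s_{\beta}>1\}$ through exactly one of its two sides, so exactly one of the $S_{\alpha}$, $S_{\beta}$ integrals crosses $\rho$ and no double count (hence no functional-equation comparison at $\rho$) arises; and because the red paths leave $T_{B}$, where $E_{B}^{*}$ itself has polar hyperplanes such as $\langle\Lambda,\beta^{\vee}\rangle=\varrho-1$ with $\xi(\varrho)=0$, what you actually need is holomorphy of $\mathsf{Res}_{S_{\delta}}E_{B}$ along those paths away from $\rho$, which the paper obtains from the $\xi(1+\langle\Lambda,\beta^{\vee}\rangle)$ cancellation of \S\ref{subsec:Critical-remarks-SL3} rather than from growth estimates or from holomorphy of $E_{B}^{*}$ on $T_{B}$.
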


\subsection{\label{subsec:Critical-remarks-SL3} Constraining the location of
the poles of cuspidal Eisenstein series}

The contour deformation above relies, at one technical step, on a
cancellation that we have not yet examined. We isolate it here because
it sharply restricts where the poles of cuspidal Eisenstein series
involved can lie.

To analytically justify the deformation in Stage II, one must control
$\mathsf{Res}_{S_{\alpha}}E_{B}(\Lambda)$ along the red path in Figure
\ref{fig:SL3} on $S_{\alpha}$. \emph{This red path exits the positive
cone. }In these regions, the term 
\[
c(w,\Lambda)=\prod_{\delta>0}\frac{\xi(\langle\Lambda,\delta^{\vee}\rangle)}{\xi(1+\langle\Lambda,\delta^{\vee}\rangle)}
\]
in $C_{B}E_{B}(\Lambda)$ \emph{has poles from the critical
zeros }of $\xi$. 

However, we need to understand only the residue $\mathsf{Res}_{S_{\alpha}}c(w,\Lambda)$,
where a magical cancellation happens: 
\[
\left(\mathsf{Res}_{S_{\alpha}}c(w,\bullet)\right)(\alpha/2+sv_{\alpha})=\frac{1}{\xi(2)}\frac{\xi(\langle\Lambda,\beta^{\vee}\rangle)}{\xi({\color{green}\mathbf{1}}+\langle\Lambda,\beta^{\vee}\rangle)}\frac{\xi({\color{red}\mathbf{1}}+\langle\Lambda,\beta^{\vee}\rangle)}{\xi(2+\langle\Lambda,\beta^{\vee}\rangle)}\big\vert_{\Lambda=\alpha/2+sv_{\alpha}}
\]
\[
=\frac{1}{\xi(2)}\frac{\xi(\langle\Lambda,\beta^{\vee}\rangle)}{\xi(2+\langle\Lambda,\beta^{\vee}\rangle)}\big\vert_{\Lambda=\alpha/2+sv_{\alpha}}
\]

The denominator factor\emph{ $\xi({\color{green}\mathbf{1}}+\langle\Lambda,\beta^{\vee}\rangle)$
}comes from the Langlands-Shahidi formula and is known in greater
generality,\emph{ }starting\emph{ }from Langlands' work ``Euler products''\emph{
\cite{Langlands-EulerProduct} }from 1967. 

The numerator factor\emph{ $\xi({\color{red}\mathbf{1}}+\langle\Lambda,\beta^{\vee}\rangle)$}
comes from the \textbf{\emph{location of the poles}} of an $L$-function
appearing in the Langlands-Shahidi formula. That an $L$-function
has a pole exactly at $1$, and not at any other point in $(0,1)$,
is the kind of statement that for cuspidal $L$-functions of $GL(n)$
is settled (Jacquet-Shalika), \emph{but in greater generality is fragmentary. }

It is not clear to the author that Langlands \emph{ever addresses
this issue in his Eisenstein series manuscript }\cite{Langlands-Spectral}
written some years\emph{ }\textbf{\emph{before }}his landmark work
``Euler Products'' \cite{Langlands-EulerProduct}. It is not clear
he was even aware of the seriousness of this issue in $1964$. We
note however that Langlands recognized this cancellation for the first
time in appendix III of \cite{Langlands-Spectral}. 

\newpage{}

\section{\label{sec:G2}$G_{2}$: explication of Langlands' calculation}

The contour-deformation framework set up in \S\ref{sec:SL3} for $SL(3)$
applies to any split semisimple group of rank two. Appendix
III of Langlands' manuscript \cite{Langlands-Spectral} concerns the exceptional
Chevalley group $G=G_{2}$:
\begin{quotation}
``We shall see that the subspace corresponding to the discrete spectrum,
that is, the spectrum of dimension $0$, is of dimension two, consisting
of the constant functions and another eigenspace of dimension one.''
--- Langlands \cite{Langlands-Spectral}. 
\end{quotation}
This is a long calculation where Langlands writes formulas that a
``conscientious reader will readily verify''. It is certain that
such a reader ``will be losing heart, for we still have $\mathfrak{s}_{6}$
to work through. He is urged to persist, for the final result is very
simple. I do not know the reason''. In my view, this calculation
is a virtuoso display of Langlands\textquoteright{} computational
power, comparable to the great calculations of Gauss and Euler. 

Fix $B=U\rtimes T$ a Borel subgroup of $G$. Our goal, like Langlands',
is the discrete spectrum contribution obtained by deforming
\[
\Psi_{B}(\eta)=\frac{1}{(2\pi i)^{\dim\mathfrak{a}}}\int_{\sigma+i\check{\mathfrak{a}}}\widehat{\eta}(\Lambda)E_{B}(\Lambda)d\Lambda
\]
as shown in Figure \ref{fig:G2}. We shall see that since our approach
tracks the zeros of the Eisenstein series $E_{B}$, the entire calculation
will appear as simple high-school algebra. 

We begin by recalling the notation, which is similar to the $SL(3)$
case. 

\subsection{Notation}

Let $\{\alpha,\beta\}$ be the simple roots, $\alpha$ short and $\beta$
long, with 
\[
\langle\alpha,\beta^{\vee}\rangle=-1,\quad\langle\beta,\alpha^{\vee}\rangle=-3.
\]
The six positive roots are listed in Figure \ref{fig:G2}. We label
them clockwise as $\beta_{1},\dots,\beta_{6}$, where 
\[
\beta_{1}=\beta,\quad\beta_{2}=\alpha+\beta,\quad\beta_{3}=3\alpha+2\beta\quad\beta_{4}=2\alpha+\beta\quad\beta_{5}=3\alpha+\beta,\quad\beta_{6}=\alpha.
\]
Set $\check{\mathfrak{a}}:=X^{*}(T)\otimes_{\mathbb{Z}}\mathbb{R}$,
$\check{\mathfrak{a}}_{\mathbb{C}}=\check{\mathfrak{a}}\otimes\mathbb{C}$,
and use coordinates 
\[
\Lambda=s_{\alpha}\varpi_{\alpha}+s_{\beta}\varpi_{\beta}
\]
A direct computation shows $\rho=\varpi_{\alpha}+\varpi_{\beta}$. 

We set
\[
S_{i}=S_{\beta_{i}}=\left\{ \Lambda\in\check{\mathfrak{a}}_{\mathbb{C}}:\langle\Lambda,\beta_{i}^{\vee}\rangle=1\right\} 
\]
\[
=\left\{ \beta_{i}/2+zv_{i}:z\in\mathbb{C}\right\} ,\quad\langle v_{i},\beta_{i}^{\vee}\rangle=0
\]
We use the following choices: 
\[
v_{1}=\varpi_{\alpha},\quad v_{5}=\varpi_{\beta}-\varpi_{\alpha},\quad v_{6}=\varpi_{\beta}.
\]
As in the $SL(3)$ case, we use

\[
d\Lambda=ds_{\alpha}ds_{\beta}=dzdx_{i},\quad x_{i}=\langle\Lambda,\beta_{i}^{\vee}\rangle-1
\]

\emph{Convention: residues at simple and double poles. }Suppose $f$
is a meromorphic function on $S_{i}$, parameterized as $\Lambda=\beta_{i}+zv_{i}$,
with a pole of order at most two at $z=z_{0}$. We write 
\[
f(\beta_{i}+zv_{i})=\frac{\mathsf{Lead}_{z_{0}}f}{(z-z_{0})^{2}}+\frac{\mathsf{Res}_{z_{0}}f}{(z-z_{0})}+(\text{holomorphic at }z_{0})
\]
Since the surrounding contour integral carries a $1/2\pi i$ factor,
crossing the pole during deformation contributes the residue of $\widehat{\eta}\cdot f$
at $z_{0}$, namely
\[
D_{v_{i}}\widehat{\eta}(\Lambda_{0})\cdot\mathsf{Lead}_{z_{0}}f+\widehat{\eta}(\Lambda_{0})\mathsf{Res}_{z_{0}}f,
\]
where $\Lambda_{0}=\beta_{i}+z_{0}v_{i}$ and $D_{v_{i}}$ denotes
the directional derivative along $v_{i}$.

\subsection{The setup of contour deformation }

Deforming the contour integral 
\[
\Psi_{B}(\eta)=\frac{1}{(2\pi i)^{\dim\check{\mathfrak{a}}}}\int_{\sigma+i\check{\mathfrak{a}}}\widehat{\eta}(\Lambda)\cdot E_{B}(\Lambda)d\Lambda
\]
as in \ref{fig:G2} produces 
\begin{itemize}
\item One 2D integral over the origin. 
\item Six 1D integrals corresponding to the hyperplane $S_{\gamma}=\frac{\gamma}{2}+H_{\gamma}$. 
\end{itemize}
\begin{figure}
\includegraphics[scale=0.35]{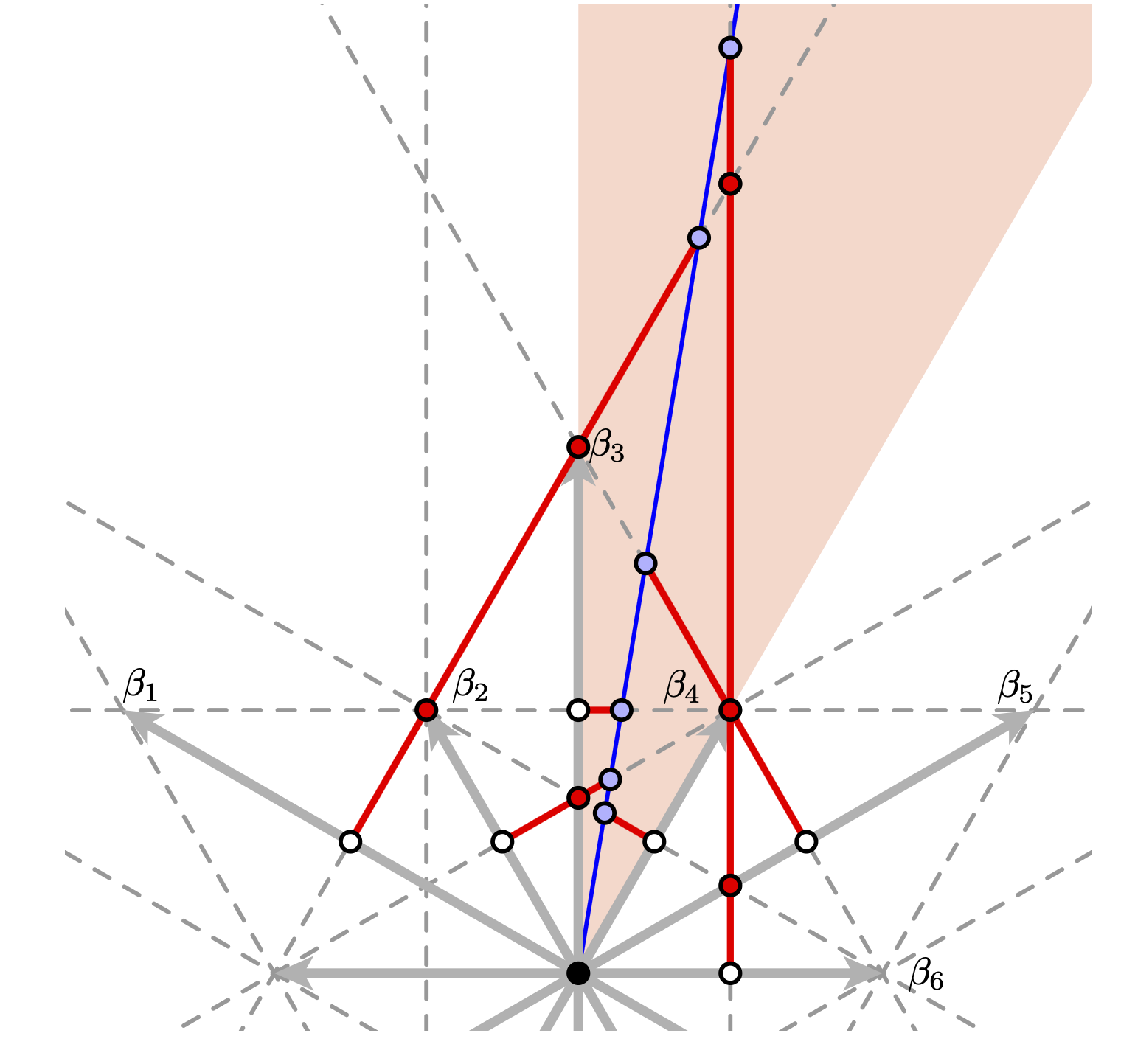}

\caption{\label{fig:G2}Contour deformation for $G_{2}$. Courtesy of Bill
Casselman.}
\end{figure}

Our goal, like Langlands', is to understand the \emph{point or discrete
spectrum} contribution. 

A glance at the figure shows that no point poles are crossed along $S_{3}$
or $S_{4}$, so these contribute nothing to the discrete spectrum.
Thus, the discrete spectrum is the sum of point contributions from
$\mathsf{Res}_{S_{i}}E_{B}$ for $i=1,2,5,6$. 

Let 
\[
D_{B}(\Lambda)=\prod_{\gamma>0}\frac{\langle\Lambda,\gamma^{\vee}\rangle}{\langle\Lambda,\gamma^{\vee}\rangle-1}
\]
and 
\[
E_{B}^{*}(\Lambda):=D_{B}(\Lambda)^{-1}\cdot E_{B}(\Lambda).
\]
Then $E_{B}^{*}$ is \emph{holomorphic} in the tube over the closed
positive cone. The function $D_{B}(\Lambda)$ captures the singularities
of the Eisenstein series $E_{B}(\Lambda)$ relevant for our calculations. 

\subsection{Along $S_{6}$}

We do this calculation in detail to show how simple matters are: the
relevant poles of $\mathsf{Res}_{S_{6}}E_{B}$ are governed by
\[
\mathsf{Res}_{S_{6}}D_{B}=\mathsf{Res}_{S_{6}}\prod_{\gamma > 0}\frac{\langle\Lambda,\gamma^{\vee}\rangle}{\langle\Lambda,\gamma^{\vee}\rangle-1}
\]
\[
=\frac{(s_{\alpha})(s_{\beta})(s_{\alpha}+s_{\beta})(s_{\alpha}+2s_{\beta})(s_{\alpha}+3s_{\beta})(2s_{\alpha}+3s_{\beta})}{(s_{\beta}-1)(s_{\alpha}+s_{\beta}-1)(s_{\alpha}+2s_{\beta}-1)(s_{\alpha}+3s_{\beta}-1)(2s_{\alpha}+3s_{\beta}-1)}\vert_{s_{\alpha}=1}
\]
\[
=\frac{(1)\cancel{(s_{\beta})}(1+s_{\beta})(1+2s_{\beta})\cancel{(1+3s_{\beta})}(2+3s_{\beta})}{(s_{\beta}-1)\cancel{(s_{\beta})}(2s_{\beta})(3s_{\beta})\cancel{(1+3s_{\beta})}}
\]
\[
=\frac{(1+s_{\beta})(1+2s_{\beta})(2+3s_{\beta})}{6(s_{\beta}-1)(s_{\beta}^{2})}
\]
Writing $\Lambda=\frac{\alpha}{2}+z\varpi_{\beta}\in S_{\alpha}$,
\[
s_{\beta}=\langle\Lambda,\beta^{\vee}\rangle=\langle\frac{\alpha}{2}+z\varpi_{\beta},\beta^{\vee}\rangle=-\frac{1}{2}+z
\]
In the new coordinates,
\[
\frac{(1+s_{\beta})(1+2s_{\beta})(2+3s_{\beta})}{6(s_{\beta}-1)(s_{\beta}^{2})}\vert_{s_{\beta}=z-\frac{1}{2}}=\frac{(z+\frac{1}{2})(2z)(3z+\frac{1}{2})}{6(z-\frac{3}{2})(z-\frac{1}{2})^{2}}
\]
The contribution to the point spectrum comes from $z=\frac{1}{2},\ \frac{3}{2}$. 

\subsubsection*{At $z=\frac{3}{2}$ or $\Lambda=\rho$}

The pole is simple. We get 
\[
\widehat{\eta}(\rho)\cdot\mathsf{Res}_{\rho}\left(\mathsf{Res}_{S_{6}}E_{B}\right)
\]

\subsubsection*{At $z=\frac{1}{2}$ or $\Lambda=\beta_{4}$}

We have a second-order pole, and the point residue obtained by contour
deformation is 
\[
D_{\varpi_{\beta}}\widehat{\eta}(\beta_{4})\cdot\mathsf{Lead}_{\beta_{4}}\left(\mathsf{Res}_{S_{6}}E_{B}\right)+\widehat{\eta}(\beta_{4})\cdot\mathsf{Res}_{\beta_{4}}\left(\mathsf{Res}_{S_{6}}E_{B}\right)
\]

\subsection{Along $S_{5}$}

We have
\[
\mathsf{Res}_{S_{5}}D_{B}\left(\beta_{5}/2+z(\varpi_{\beta}-\varpi_{\alpha})\right)=\frac{(z)(z+\frac{3}{2})}{\left(z-\frac{1}{2}\right)^{2}}
\]
The double pole at $z=\frac{1}{2}$ corresponding to $\beta_{4}$
contributes 
\[
D_{\varpi_{\beta}-\varpi_{\alpha}}\widehat{\eta}(\beta_{4})\cdot\mathsf{Lead}_{\beta_{4}}\left(\mathsf{Res}_{S_{5}}E_{B}\right)+\widehat{\eta}(\beta_{4})\cdot\mathsf{Res}_{\beta_{4}}\left(\mathsf{Res}_{S_{5}}E_{B}\right).
\]

\subsection{Along $S_{1}$ and $S_{2}$}

We have
\[
\left(\mathsf{Res}_{S_{1}}D_{B}\right)(\beta/2+z\varpi_{\alpha})=\frac{(z+\frac{3}{2})(z)}{(z-\frac{5}{2})(z-\frac{1}{2})}
\]
The point $z=\frac{5}{2}$ corresponds to $\rho$, which lies outside
the red path on $S_{1}$ and is not crossed. Only the simple pole
at $z=\frac{1}{2}$, which corresponds to $\Lambda=\beta_{2}$, contributes:

\[
\widehat{\eta}(\beta_{2})\cdot\mathsf{Res}_{\beta_{2}}\mathsf{Res}_{S_{1}}E_{B}
\]

On $S_{2}$ a similar elementary calculation shows that $\mathsf{Res}_{S_{2}}D_{B}$
is holomorphic at every point of its red path, and there is no contribution.

\subsection{Support of the discrete spectrum}

Summing the \emph{point} residues from $S_{1},\dots,S_{6}$, the discrete
spectrum part of $\Psi_{B}(\eta)$ is supported on $\{\rho,\beta_{2},\beta_{4}\}$
and equals 
\[
\widehat{\eta}(\rho)\cdot\mathsf{Res}_{\rho}\left(\mathsf{Res}_{S_{6}}E_{B}\right)
\]
\[
+\widehat{\eta}(\beta_{2})\cdot\mathsf{Res}_{\beta_{2}}\left(\mathsf{Res}_{S_{1}}E_{B}\right)
\]
\[
+D_{\varpi_{\beta}-\varpi_{\alpha}}\widehat{\eta}(\beta_{4})\cdot\mathsf{Lead}_{\beta_{4}}\left(\mathsf{Res}_{S_{5}}E_{B}\right)+\widehat{\eta}(\beta_{4})\cdot\mathsf{Res}_{\beta_{4}}\left(\mathsf{Res}_{S_{5}}E_{B}\right)
\]
\[
+D_{\varpi_{\beta}}\widehat{\eta}(\beta_{4})\cdot\mathsf{Lead}_{\beta_{4}}\left(\mathsf{Res}_{S_{6}}E_{B}\right)+\widehat{\eta}(\beta_{4})\cdot\mathsf{Res}_{\beta_{4}}\left(\mathsf{Res}_{S_{6}}E_{B}\right).
\]

\subsection{The discrete spectrum}

We now identify these six terms as scalar multiples of just two automorphic
forms.

\emph{Hecke eigenfunctions among Laurent coefficients. By the same
reasoning as in the $SL(2)$ case, only those Laurent coefficients of
$E_{B}$ at the candidate points that are spherical Hecke eigenfunctions
contribute to the discrete spectrum. Among the relevant Laurent coefficients
at $\rho,\ \beta_{2}$ and $\beta_{4}$, the only such eigenfunctions
are}
\[
E_{B}^{*}(\rho),\quad E_{B}^{*}(\beta_{2}),\quad E_{B}^{*}(\beta_{4}).
\]
The functional equations of $E_{B}$ relate 
\[
E_{B}^{*}(\beta_{2})=C\cdot E_{B}^{*}(\beta_{4}),\quad\text{for some }C\neq0.
\]

\emph{Reduction of the leading-term contributions. }The leading coefficients
at the double poles are read off directly:

\[
\mathsf{Lead}_{z=\frac{1}{2}}\frac{(z+\frac{1}{2})(2z)(3z+\frac{1}{2})}{6(z-\frac{3}{2})(z-\frac{1}{2})^{2}}=-\frac{1}{3}\text{ and }\mathsf{Lead}_{z=\frac{1}{2}}\frac{(z)(z+\frac{3}{2})}{\left(z-\frac{1}{2}\right)^{2}}=1,
\]
Therefore
\[
\mathsf{Lead}_{\beta_{4}}\mathsf{Res}_{S_{6}}E_{B}=-\frac{1}{3}E_{B}^{*}(\beta_{4}),\quad\mathsf{Lead}_{\beta_{4}}\mathsf{Res}_{S_{5}}E_{B}=E_{B}^{*}(\beta_{4}).
\]
The two leading-term contributions therefore combine to
\[
\left(D_{\varpi_{\beta}-\varpi_{\alpha}}-\frac{1}{3}D_{\varpi_{\beta}}\right)\widehat{\eta}(\beta_{4})E_{B}^{*}(\beta_{4})=\frac{1}{3}D_{\beta}\widehat{\eta}(\beta_{4})\cdot E_{B}^{*}(\beta_{4})
\]
using $\beta=-3\varpi_{\alpha}+2\varpi_{\beta}$. Langlands denotes
$D_{\beta_{1}}$ as $D_{1}$ in appendix III of \cite{Langlands-Spectral}. 

\emph{Conclusion. }Putting everything together, the discrete spectrum
is

\[
\widehat{\eta}(\rho)\cdot E_{B}^{*}(\rho)\cdot c_{\rho}+\left(a\widehat{\eta}(\beta_{2})+b\widehat{\eta}(\beta_{4})+cD_{\beta_{1}}\widehat{\eta}(\beta_{4})\right)\cdot E_{B}^{*}(\beta_{4})
\]
for some constants $a,b,c,c_{\rho}\neq0$. Since for any $\eta\in C_{c}^{\infty}(A_{B})$,
we have
\[
\langle\Psi_{B}(\eta),E_{B}^{*}(\beta_{4})\rangle=C'\left(a\widehat{\eta}(\beta_{2})+b\widehat{\eta}(\beta_{4})+cD_{\beta_{1}}\widehat{\eta}(\beta_{4})\right)
\]
for some $C'\neq0$, it follows that the exponents of $E_{B}^{*}$
are $-\beta_{2},-\beta_{4}$. Note that there is a logarithmic derivative
in the constant term $C_{B}E_{B}^{*}(\beta_{4})$. By Langlands' square-integrability
criterion, 
\[
E_{B}^{*}(\beta_{4})\in L^{2}\left(G(F)\backslash G(\mathbb{A})\right).
\]

\begin{thm}
In the above setting, the discrete spectrum is two-dimensional, spanned
by
\[
E_{B}^{*}(\rho)\quad\text{and}\quad E_{B}^{*}(\beta_{4}).
\]
\end{thm}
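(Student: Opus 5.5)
The plan is to assemble the pieces already computed in this section into a spectral statement: everything coming from point residues is a combination of $E_{B}^{*}(\rho)$ and $E_{B}^{*}(\beta_{4})$, both are square-integrable and nonzero, and they are linearly independent. The discrete part of $\Psi_{B}(\eta)$ is the sum of point residues obtained when the one-dimensional integrals along $S_{1},\dots,S_{6}$ are deformed to their real points $\beta_{i}/2$. By the computations of $\mathsf{Res}_{S_{i}}D_{B}$ above, only the points $\rho,\beta_{2},\beta_{4}$ contribute. $S_{3},S_{4}$ cross no point poles. $\mathsf{Res}_{S_{2}}D_{B}$ is holomorphic along its path. On $S_{1}$ the point $\rho$ is not crossed.

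First I would factor $E_{B}=D_{B}E_{B}^{*}$ and use the holomorphy of $E_{B}^{*}$ near each candidate point. This expresses every $\mathsf{Res}$ and $\mathsf{Lead}$ term as a combination of $E_{B}^{*}(\Lambda_{0})$ and directional derivatives $D_{v}E_{B}^{*}(\Lambda_{0})$.

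The key step, and the one I expect to be the main obstacle, is eliminating the derivative terms $D_{v}E_{B}^{*}(\beta_{4})$. These arise in $\mathsf{Res}_{\beta_{4}}$ at the double poles on $S_{5}$ and $S_{6}$. The heuristic of \S\ref{subsec:Why-simple poles} says only genuine eigenfunctions can survive. To prove it, I would expand the two residue terms at $\beta_{4}$ and show that the derivative parts from $S_{5}$ and $S_{6}$ cancel. They appear with coefficients $1$ and $-\tfrac13$ times the same scalar $\widehat{\eta}(\beta_{4})$, but along the different directions $\varpi_{\beta}-\varpi_{\alpha}$ and $\varpi_{\beta}$. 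To compare them I would use the functional equations $E_{B}(\Lambda)=c(w,\Lambda)E_{B}(w\Lambda)$ with $w$ the reflection fixing $\beta_{4}$'s stabilizer. Concretely, let $w_{\beta}$ be the reflection in $\beta$, which swaps $S_{5}$ and $S_{6}$ near $\beta_{4}$ and fixes the line through $\beta_{4}$ in the direction of $D_{\beta}$. This would show that the combination $D_{\varpi_{\beta}-\varpi_{\alpha}}-\tfrac13 D_{\varpi_{\beta}}=\tfrac13 D_{\beta}$ of $E_{B}^{*}$ at $\beta_{4}$ pairs with a vanishing or proportional term. If a direct cancellation fails, the fallback is to test against $\Delta$, or the Hecke operators. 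The sum of all residual terms must be an eigenfunction, because it equals $\Psi_{B}(\eta)$ minus the continuous parts, each of which is an integral of eigenfunctions.

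Next I would use the functional equation to get $E_{B}^{*}(\beta_{2})=C\,E_{B}^{*}(\beta_{4})$. Here $w$ maps $\beta_{2}$ to $\beta_{4}$ (the reflection in $\alpha$), and $C=c^{*}(w,\beta_{2})$ is computed after cancelling the $D_{B}$ factors. This reduces the discrete part to the displayed form
\[
c_{\rho}\widehat{\eta}(\rho)E_{B}^{*}(\rho)+\ell(\eta)E_{B}^{*}(\beta_{4}).
\]

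Finally I would identify both functions. $E_{B}^{*}(\rho)$ is a nonzero constant, from its constant term, exactly as in $SL(3)$. For $E_{B}^{*}(\beta_{4})$, I would compute its constant term from \eqref{eq:const-term}. After taking the limit, the exponents are $-\beta_{2}$ and $-\beta_{4}$ (with a logarithmic term). Both lie in the negative obtuse cone, so Langlands' square-integrability criterion gives $E_{B}^{*}(\beta_{4})\in L^{2}$. This function is nonzero and, having non-constant exponents, is independent of $1$.

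The adjoint formula gives $\langle\Psi_{B}(\eta),E_{B}^{*}(\beta_{4})\rangle=C'\ell(\eta)$. Hence the term $\ell(\eta)E_{B}^{*}(\beta_{4})$ is the orthogonal projection onto that line. Since the $\Psi_{B}(\eta)$ are dense in $L^{2}_{B}$ and $\ell\not\equiv0$, the discrete spectrum is exactly two-dimensional.
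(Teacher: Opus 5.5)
Your skeleton is the paper's: the support is $\{\rho,\beta_2,\beta_4\}$, you factor $E_B=D_BE_B^*$, you get $E_B^*(\beta_2)=C\,E_B^*(\beta_4)$ from $w_\alpha$, you combine the leading terms via $D_{\varpi_\beta-\varpi_\alpha}-\frac13D_{\varpi_\beta}=\frac13D_\beta$, and you apply Langlands' criterion to the exponents $-\beta_2,-\beta_4$.

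The one genuine difference is how you dispose of the terms $\widehat\eta(\beta_4)\,D_vE_B^*(\beta_4)$ produced by the double poles on $S_5$ and $S_6$.
\begin{itemize}
\item The paper discards them by the $SL(2)$ heuristic that only Hecke-eigenfunction Laurent coefficients contribute. Its closing Remark admits this is unproved.
\item You eliminate them with a functional equation. That does work, and it turns a heuristic step into an argument.
\end{itemize}
However, the mechanism is not the one you describe.
\begin{itemize}
\item The derivative parts from $S_5$ and $S_6$ do not cancel. By Leibniz they add up to $\frac13\widehat\eta(\beta_4)\,D_\beta E_B^*(\beta_4)$, in the same direction $\beta$ that multiplies $D\widehat\eta$.
\item $w_\beta$ does not swap $S_5$ and $S_6$. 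Since $w_\beta\alpha=\alpha+\beta$ and $w_\beta(3\alpha+\beta)=3\alpha+2\beta$, it sends $S_6\to S_2$ and $S_5\to S_3$.
\item What matters is that $w_\beta$ fixes $\beta_4=\varpi_\alpha$ (because $\langle\varpi_\alpha,\beta^\vee\rangle=0$) and negates $\beta$.
\end{itemize}
With $D_B(w_\beta\Lambda)/D_B(\Lambda)=(s_\beta-1)/(s_\beta+1)$, the functional equation becomes $E_B^*(\Lambda)=c^*(\langle\Lambda,\beta^\vee\rangle)\,E_B^*(w_\beta\Lambda)$. Here $c^*(s)=\frac{\xi(s)}{\xi(1+s)}\cdot\frac{s-1}{s+1}$ is holomorphic at $0$ with $c^*(0)=1$. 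Set $f(t)=E_B^*(\beta_4+t\beta)$, so that $f(t)=c^*(2t)f(-t)$. Differentiating at $t=0$ gives $D_\beta E_B^*(\beta_4)=c^{*\prime}(0)\,E_B^*(\beta_4)$. This is absorbed into the coefficient $b$. It is precisely because the combination that appears lies along the $w_\beta$-anti-invariant direction $\beta$ that this works.

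Your fallback argument is invalid and should be dropped. $\Psi_B(\eta)$ is not an eigenfunction. An integral of eigenfunctions over a continuum of eigenvalues is not an eigenfunction either. And the discrete part is itself a sum of eigenfunctions with two different eigenvalues.

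Two further points are asserted rather than checked, exactly as in the paper:
\begin{itemize}
\item that only $-\beta_2,-\beta_4$ survive in $C_BE_B^*(\beta_4)$;
\item that $\langle\Psi_B(\eta),E_B^*(\beta_4)\rangle$ is proportional to your $\ell(\eta)$.
\end{itemize}
The first claim is correct. Only $w_0,w_0w_\beta$ (exponent $-\beta_4$, with the logarithmic term) and $w_0w_\alpha,w_0w_\alpha w_\beta$ (exponent $-\beta_2$) have $c(w,\Lambda)/D_B(\Lambda)$ not vanishing at $\varpi_\alpha$.

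Finally, calling $\ell(\eta)E_B^*(\beta_4)$ the orthogonal projection additionally requires the constant $C'$ to match $\|E_B^*(\beta_4)\|^2$. That is not needed for the dimension count.
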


\begin{rem*}
A \emph{logical} proof that the contour deformation picks only Hecke
eigenfunctions is missing. The simplicity of the poles of $E(s)$
in the $SL(2)$-case followed from Maass--Selberg. The situation here
is far more complicated. 
\end{rem*}

\section{\label{sec:General}A program to rethink Langlands' work}

The $G_{2}$ calculation of \S\ref{sec:G2}, like the $SL(3)$ calculation
that preceded it, was carried out by deforming a contour that contained
information about both the poles and the zeros of the Borel Eisenstein
series explicitly. Once this is done, the residual spectrum is recovered
as a regularization of the Borel Eisenstein series at a finite list
of distinguished points, and the calculation that Langlands warned
would be unavoidably intricate reduces to elementary algebra.

The aim of this section is to extract from these examples a coherent
program. The central heuristic suggested by examples is that the \emph{support
of the discrete spectrum} due to $E_{B}(\Lambda)$ is captured by
a single rational function $D_{B}(\Lambda)$ that records \emph{both
the relevant poles and zeros} of $E_{B}(\Lambda)$. The discrete spectrum
then is spanned by the regularized Eisenstein series $E_{B}^{*}$
evaluated at these points. 

The section is in two parts. Subsection \ref{subsec:Program-Borel}
illustrates the program in the simplest case, that of the Borel Eisenstein
series, where every ingredient --- the Eisenstein series, its regularization,
the central heuristic, and the resulting automorphic forms --- can
be exhibited concretely. Subsection \ref{subsec:Program-general}
then formulates the expectation for a general parabolic subgroup,
and the section closes with remarks and a coda.

\subsection{\label{subsec:Program-Borel}The program in the simplest case: the
Borel Eisenstein series}

Throughout \ref{subsec:Program-Borel}, we work with the Borel Eisenstein
series. We fix the following notation.

\subsubsection*{Notation}
\begin{itemize}
\item $G$: a connected split semisimple group over a number field $F$. 
\item $B=U\rtimes T$: fixed Borel subgroup $B\subset G$ with a fixed maximal
$F$-split torus $T$. 
\item $\Delta_{B}$: the set of simple roots with respect to $B$.
\item $\mathcal{A}[G]$: the space of automorphic forms on $[G]=G(F)\backslash G(\mathbb{A})$. 
\item $\check{\mathfrak{a}}_{B,\mathbb{C}}:=X^{*}(T)\otimes_{\mathbb{Z}}\mathbb{C}$
\end{itemize}
\begin{rem}
When we say a $G(\mathbb{A})$-intertwining operator, we follow the
usual convention of dealing with archimedean places using the so-called
$(\mathfrak{g},K_{\infty})$-modules. 
\end{rem}

\newpage{}

\subsubsection{Borel Eisenstein series, representation theoretic}

For $\Lambda\in\check{\mathfrak{a}}_{B,\mathbb{C}}$, let
\[
I_{B}^{G}(\mathbf{1},\Lambda)=\text{Ind}_{B(\mathbb{A})}^{G(\mathbb{A})}(e^{\langle\rho+\Lambda,H_{B}(\cdot)\rangle})
\]
As usual, we identify the representation spaces of $I_{B}^{G}(\mathbf{1},\Lambda)$
with a single space 
\[
V_{\Lambda}=V\subset \mathcal{A}( T(F)U(\mathbb{A})\backslash G(\mathbb{A})),
\]
but the action of $G(\mathbb{A})$ depends on $\Lambda$. 

The Eisenstein series
\[
E_{B}(\mathbf{1}):\check{\mathfrak{a}}_{B,{\mathbb{C}}}\to\text{Hom}_{\mathbb{C}}\left(V,\mathcal{A}[G]\right),\ \ E_{B}(\mathbf{1})(\Lambda)\in\text{Hom}_{G(\mathbb{A})}\left(V_{\Lambda},\mathcal{A}[G]\right),
\]
is initially defined on an open set in $\check{\mathfrak{a}}_{B,{\mathbb{C}}}$
by
\[
E_{B}(\varphi,\Lambda):=\left(E_{B}(\mathbf{1})(\Lambda)\right)(\varphi)=\sum_{\gamma\in P(F)\backslash G(F)}e^{\langle\rho+\Lambda,H_{B}(\gamma g)\rangle}\varphi(\gamma g),
\]
and extends to a meromorphic function on $\check{\mathfrak{a}}_{\mathbb{C}}.$
If $\varphi=1$, we get 
\[
E_{B}(\Lambda):=E_{B}(1,\Lambda).
\]

\subsubsection{Regularization}

Just as before, one has
\[
E_{B}(\mathbf{1},\Lambda)=E_{B}^{*}(\mathbf{1},\Lambda)D_{B}(\mathbf{1},\Lambda),\quad D_{B}(\mathbf{1},\Lambda):=D_{B}(\Lambda)=\prod_{\gamma>0}\frac{\langle\Lambda,\gamma^{\vee}\rangle}{\langle\Lambda,\gamma^{\vee}\rangle-1}
\]
and $E_{B}^{*}(\mathbf{1},\bullet)$ is holomorphic and non-zero (as
an intertwining operator) on the tube 
\[
T_{B}=\left\{ \Lambda\in\check{\mathfrak{a}}_{\mathbb{C}}:\Re\langle\Lambda,\alpha^{\vee}\rangle\ge0\ \forall\alpha\in\Delta_{B}\right\} 
\]
over the closed positive cone. We note that $E_{B}^{*}$ is the only
way to regularize $E_{B}$ so that it is still a non-zero intertwining
operator for every $\Lambda\in T_{B}$. The other Laurent coefficients
do not give $G(\mathbb{A})$-intertwining operators. 

\subsubsection{\label{subsubsec:disc-examples}The central heuristic}

\bigskip{}

\emph{Spherical case. }As before, we need to deform the integral (for
$\eta\in C_{c}^{\infty}(A_{B})$)
\[
\Psi_{B}(\eta)=\frac{1}{(2\pi i)^{\dim\check{\mathfrak{a}}_{B}}}\int_{\sigma+i\check{\mathfrak{a}}_{B}}\widehat{\eta}(\Lambda)E_{B}(\Lambda)d\Lambda
\]
\[
=\frac{1}{(2\pi i)^{\dim\check{\mathfrak{a}}_{B}}}\int_{\sigma+i\check{\mathfrak{a}}_{B}}\widehat{\eta}(\Lambda)D_{B}(\Lambda)E_{B}^{*}(\Lambda)d\Lambda
\]
in several steps. The central heuristic suggested by our examples
is that the special points that contribute to the discrete spectrum
arise from $D_{B}(\Lambda)$. This heuristic, if true, reduces the
problem of contour deformation to the study of the Paley-Wiener distribution
\[
\eta\mapsto\frac{1}{(2\pi i)^{\dim\check{\mathfrak{a}}_{B}}}\int_{\sigma+i\check{\mathfrak{a}}_{B}}\widehat{\eta}(\Lambda)D_{B}(\Lambda)d\Lambda
\]
If $\rho_{\#}$ is a point in the closed positive cone that contributes
to the discrete spectrum, then the contribution is simply $E_{B}^{*}(\rho_{\#})$. 

\bigskip{}

\emph{Representation theoretic view}. The exact same dynamic appears
at the representation theoretic level. Now, one has to deform 
\[
\Psi_{B}(\eta,\varphi)=\frac{1}{(2\pi i)^{\dim\check{\mathfrak{a}}_{B}}}\int_{\sigma+i\check{\mathfrak{a}}_{B}}\widehat{\eta}(\Lambda)E_{B}(\varphi,\Lambda)d\Lambda
\]
for each $\varphi\in V=I_{B}^{G}(\mathbf{1},\Lambda)$ (as above),
and one looks at the image under
\[
E_{B}^{*}(\mathbf{1},\rho_{\#}):I_{B}^{G}(\mathbf{1},\rho_{\#})\to L^{2}(G(F)\backslash G(\mathbb{A}))
\]

\bigskip{}

\emph{Examples}. We now exhibit examples of discrete automorphic representations
the heuristic predicts. Let
\[
\kappa:SL(2,\mathbb{C})\to\widehat{G}
\]
be a \emph{distinguished }morphism (in the sense of unipotent orbits
theory). By restricting to the torus diagonal in $SL(2,\mathbb{C})$,
one obtains a coroot 
\[
2\rho_{\kappa}\in X_{*}(\widehat{T})\simeq X^{*}(T).
\]
We assume that $\rho_{\kappa}$ is in the closed positive cone. The
following is likely proved in \cite{Miller_Annals}\cite{Moeglin_Compositio},
in a totally different formulation. 
\begin{fact}
\label{fact:distinguished-L2}For $\kappa$ distinguished, let 
\[
V(\mathbf{1},\rho_{\kappa}):=E_{B}^{*}(\mathbf{1},\rho_{\kappa})\left(I_{B}^{G}(\mathbf{1},\rho_{\kappa})\right).
\]
The representation $V(\mathbf{1},\rho_{\kappa})$ is an irreducible
unitary spherical representation of $G(\mathbb{A})$ occurring in $L^{2}\left(G(F)\backslash G(\mathbb{A})\right)$. 

These representations were used to prove a conjecture of Arthur that
the spherical constituent of an unramified principal series of a Chevalley
group over any local field of characteristic zero is unitarizable
if its Langlands parameter coincides with half the weighted marking
of a coadjoint nilpotent orbit of the Langlands dual Lie algebra.
\end{fact}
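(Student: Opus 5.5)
To prove Fact \ref{fact:distinguished-L2}, I would show that the image $V(\mathbf{1},\rho_{\kappa})$ is nonzero, irreducible and square-integrable. Unitarity then comes for free from the $L^{2}$ inner product, and sphericity from the fact that $E_{B}^{*}(\mathbf{1},\rho_{\kappa})$ is a nonzero intertwining operator on the tube $T_{B}$ applied to the spherical vector.

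\emph{Step 1: nonvanishing.} By the regularization statement, $E_{B}^{*}(\mathbf{1},\bullet)$ is holomorphic and nonzero on $T_{B}$. Since $\rho_{\kappa}$ lies in the closed positive cone, the spherical vector maps to $E_{B}^{*}(\rho_{\kappa})\neq0$.

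\emph{Step 2: irreducibility.} The image of $I_{B}^{G}(\mathbf{1},\rho_{\kappa})$ is a quotient of this principal series. I would show it is generated by the spherical vector. Locally, one can take the normalized long intertwining operator, identified through the constant term with the coefficients $c(w,\Lambda)$ regularized by $D_{B}$. Its image at a dominant unramified parameter is the Langlands quotient, which is irreducible. The plan is to prove that $C_{B}E_{B}^{*}(\varphi,\rho_{\kappa})$ factors through the long intertwining operator $M(w_{0},\rho_{\kappa})$, so that the kernel of $E_{B}^{*}$ contains the kernel of the Langlands quotient map. Together with nonvanishing, this gives irreducibility.

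\emph{Step 3: square-integrability.} This is the main obstacle. I would use Langlands' criterion, as was done for $E_{B}^{*}(\beta_{4})$ in \S\ref{sec:G2}. Compute
\[
C_{B}E_{B}^{*}(\rho_{\kappa})=\lim_{\Lambda\to\rho_{\kappa}}D_{B}(\Lambda)^{-1}\sum_{w\in W}c(w,\Lambda)e^{\langle\rho+w\Lambda,H(g)\rangle},
\]
and show that every exponent $w\rho_{\kappa}$ that survives in the limit, possibly carrying polynomial-in-$H$ factors from derivatives as in the $G_{2}$ case, satisfies $\Re(w\rho_{\kappa})\in$ the interior of the negative obtuse cone. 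The needed input is a combinatorial count. For each $w$, compare the order of vanishing of $D_{B}$ at $\rho_{\kappa}$, given by the number of roots with $\langle\rho_{\kappa},\gamma^{\vee}\rangle=0$ minus those with $\langle\rho_{\kappa},\gamma^{\vee}\rangle=1$, with the order of $c(w,\bullet)$, given by the number of $\gamma>0$ with $w\gamma<0$ and $\langle\rho_{\kappa},\gamma^{\vee}\rangle=1$ minus those with value $0$, counted along generic lines through $\rho_{\kappa}$.

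Distinguishedness enters here. The principal $SL_{2}$ in a distinguished Levi has no zero-weight space in $\widehat{\mathfrak{g}}^{\mathrm{ad}}$ beyond $\widehat{\mathfrak{t}}$ in the appropriate sense, so $\dim\widehat{\mathfrak{g}}_{0}=\dim\widehat{\mathfrak{g}}_{2}$. This Bala--Carter identity should give exact cancellation for the dominant-type terms. It should also force $c(w,\cdot)$ to vanish to higher order whenever $w\rho_{\kappa}$ is not strictly negative, the same mechanism as the zeros of $\mathsf{Res}_{S_{\alpha}}D_{B}$ that killed the parasitic points in \S\ref{subsec:parasite-SL3}. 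I would first verify this count in the $SL(3)$ case ($\rho_{\kappa}=\rho$) and the $G_{2}$ case ($\beta_{4}$), then attempt a general argument via the grading of $\widehat{\mathfrak{g}}$ by $\rho_{\kappa}$. Failing a clean general argument, I would defer to \cite{Miller_Annals,Moeglin_Compositio}.
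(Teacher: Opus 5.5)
\paragraph{Verdict: genuine gap in Step 3.} The paper gives no proof of this Fact. It defers to \cite{Miller_Annals,Moeglin_Compositio} and explicitly asks for a direct argument, so your fallback is exactly what the paper does. Your direct route, however, does not close. The failure is in Step 3, which carries the entire content, and it rests on a miscount.

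\paragraph{The miscount.} You count a zero of $c(w,\cdot)$ at $\rho_\kappa$ for each $\gamma>0$ with $w\gamma<0$ and $\langle\rho_\kappa,\gamma^\vee\rangle=0$. There is none. Since $\xi$ has simple poles at $0$ and $1$ with residues $-1$ and $1$, the factor $\xi(z)/\xi(1+z)$ equals $-1$ at $z=0$. The zero of $E_B$ along $H_\gamma$ is a cancellation between the $w$ and $ws_\gamma$ terms, exactly as $E(0)=c(0)E(0)=-E(0)$ for $SL(2)$.

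Put $N_j=\#\{\gamma>0:\langle\rho_\kappa,\gamma^\vee\rangle=j\}$ and $p_w=\#\{\gamma>0: w\gamma<0,\ \langle\rho_\kappa,\gamma^\vee\rangle=1\}$. Then $D_B^{-1}c(w,\cdot)$ has order $N_1-N_0-p_w$ at $\rho_\kappa$, which is $-N_0$ for $w=w_0$. So individual terms are singular whenever $N_0>0$. In $G_2$ at $\beta_4=\varpi_\alpha$ one has $N_0=1$ and $N_1=4$. There the $w_0$ term alone has a pole along $H_\beta$, removed only by the $w_0s_\beta$ term. This cancellation is what produces the polynomial-in-$H$ (logarithmic) terms in $C_BE_B^*(\beta_4)$.

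\paragraph{The correct bookkeeping.} Let $W_0=\mathrm{Stab}_W(\rho_\kappa)$, and let $\Phi_0^+=\{\gamma>0:\langle\rho_\kappa,\gamma^\vee\rangle=0\}$.
\begin{itemize}
\item Group $W$ into cosets $wW_0$; all elements of a coset share the exponent $w\rho_\kappa$.
\item Multiply by the $W_0$-invariant product $\prod_{\langle\rho_\kappa,\gamma^\vee\rangle=1}(\langle\Lambda,\gamma^\vee\rangle-1)$.
\item Each coset sum vanishes on $H_\gamma$ for every $\gamma\in\Phi_0^+$, so you may divide by $\prod_{\gamma\in\Phi_0^+}\langle\Lambda,\gamma^\vee\rangle$.
\end{itemize}
The outcome: for $w$ minimal in its coset, $wW_0$ can contribute at $\rho_\kappa$ only if $p_w\ge N_1-N_0$. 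Distinguishedness, in the form $\dim\widehat{\mathfrak g}_0=r+2N_0=N_1=\dim\widehat{\mathfrak g}_2$, turns this into $p_w\ge r+N_0$.

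\paragraph{What is still missing.} That threshold is all the Bala--Carter identity gives you; it says nothing about where $w\rho_\kappa$ lies. You still need to show that every minimal coset representative with $p_w\ge r+N_0$ has $w\rho_\kappa$ in the open negative obtuse cone. Where that fails, you would instead have to compute the leading coefficient, which involves values of $\xi'/\xi$, and show it vanishes.

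The survivors are genuinely more than $w_0W_0$. In $G_2$, $w=s_\beta s_\alpha s_\beta s_\alpha$ has $p_w=3=r+N_0$ and $w\beta_4=-\beta_2$; this is the second exponent found in \S\ref{sec:G2}. Your sketch contains no argument for this implication. It is precisely the content the paper delegates to the references, Miller's being, as the paper notes, a computer verification.

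\paragraph{Two smaller points.}
\begin{itemize}
\item \emph{Step 2.} Factoring $C_BE_B^*$ through $M(w_0,\rho_\kappa)$ is unsupported; it is essentially equivalent to the irreducibility you want. It is also unnecessary. Once $V(\mathbf{1},\rho_\kappa)\subset L^2$, it is unitary, hence completely reducible. It is a quotient of $I_B^G(\mathbf{1},\rho_\kappa)$, which has a unique irreducible quotient at every place because $\rho_\kappa$ is dominant. So it is irreducible.
\item \emph{The Arthur-conjecture sentence.} This needs the global-to-local step: local components of an irreducible summand of $L^2$ are unitary, and every local field of characteristic zero is a completion of a number field. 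For a non-distinguished orbit, it also needs unitary induction from a Levi in which the orbit is distinguished.
\end{itemize}
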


\begin{rem}
It is highly desirable to obtain a direct proof of the above fact.
Miller's very readable paper \cite{Miller_Annals} is, unfortunately,
a computer verification of this conjecture.
\end{rem}

\subsubsection{\label{subsec:Poles-disc}Poles of some unramified Eisenstein series}

We use the notation from subsection \ref{subsec:deg-Eis-poles} and
generalize the result there. Let 
\[
\kappa:SL(2,\mathbb{C})\to\widehat{M}
\]
be a \emph{distinguished }morphism. From fact \ref{fact:distinguished-L2},
we get an irreducible representation
\[
\pi_{\kappa}\subset L^{2}(A_{M}M(F)\backslash M(\mathbb{A}))
\]
We have the Eisenstein series $E_{P}(\pi_{\kappa},s)$. The poles
of this Eisenstein series can be obtained using the method in \cite{Dattu-thesis}. 

Using the adjoint representation of $\widehat{M}$ on $\widehat{\mathfrak{n}}$
and the homomorphism $\kappa:SL(2,\mathbb{C})\to\widehat{M}$ we obtain,
as in theorem \ref{thm:deg-Eis}, the decompositions
\[
\widehat{\mathfrak{n}}=r_{1}\oplus\cdots\oplus r_{m},\quad\mathrm{Langlands-Shahidi}
\]
and 
\[
r_{j}=\bigoplus_{\ell\ge1}V_{\ell}^{\oplus m_{j}(\kappa,\ell)},\quad m_{j}(\kappa,\ell)\in\mathbb{Z}_{\ge0}
\]

\begin{thm}
Let
\[
\mathsf{Q}_{P}(\kappa,s)=\prod_{j=1}^{m}\prod_{\ell\ge1}\left(\frac{js-\frac{\ell+1}{2}}{js+\frac{\ell-1}{2}}\right)^{m_{j}(\kappa,\ell)}
\]
Then $\mathsf{Q}_{P}(\kappa,s)\cdot E_{P}(\pi_{\kappa},s)$ is holomorphic
and nonzero for $\Re(s)\ge0$. 
\end{thm}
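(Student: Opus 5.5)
The plan is to follow the strategy of Theorem \ref{thm:deg-Eis}, proved in \cite{Dattu-thesis}: write $E_{P}(\pi_{\kappa},s)$ as an explicit iterated residue, or regularization, of the Borel Eisenstein series, and then read off its poles from a rational function of the type $D_{B}$.

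The first step is to realize $\pi_{\kappa}$ inside the residual spectrum of $M$ via Fact \ref{fact:distinguished-L2}, applied to $M$ in place of $G$: $\pi_{\kappa}$ is the image of $E_{B\cap M}^{*}(\mathbf{1},\rho_{\kappa})$, so on the $M$-side $E_{B\cap M}^{M}(\Lambda_M)=D_{B\cap M}(\Lambda_M)E^{*}_{B\cap M}(\Lambda_M)$ with $E^*$ holomorphic and nonzero at $\rho_{\kappa}$. By induction in stages, for $\Lambda=\rho_{\kappa}+s\varpi_{P}+\nu$ with $\nu$ a transverse variable in $\check{\mathfrak{a}}_{B}^{M}$, the Borel Eisenstein series $E_{B}(\Lambda)$ becomes the $P$-Eisenstein series of $E^{M}_{B\cap M}(\rho_\kappa+\nu)$. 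Dividing by the $M$-part of $D_{B}$ and restricting to $\nu=0$ then gives the key identity
\[
E_{P}(\pi_{\kappa},s)=\Big(D_{B\cap M}^{-1}E_{B}\Big)\Big|_{\Lambda=\rho_{\kappa}+s\varpi_{P}} .
\]
The proof has to check that this restriction is well defined, i.e.\ independent of the direction of approach. That follows because $E^{*}_{B\cap M}$ is holomorphic at $\rho_\kappa$.

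The second step is to write $E_{B}=D_{B}E_{B}^{*}$. With $E_{B}^{*}$ holomorphic and nonzero on $T_{B}$, the identity gives
\[
E_{P}(\pi_{\kappa},s)=\prod_{\gamma>0,\ \gamma\notin\Phi_{M}}\frac{\langle\Lambda,\gamma^{\vee}\rangle}{\langle\Lambda,\gamma^{\vee}\rangle-1}\Big|_{\Lambda=\rho_{\kappa}+s\varpi_{P}}\cdot E^{*}_{B}(\rho_{\kappa}+s\varpi_{P}).
\]
Since $\rho_\kappa$ is dominant for $M$ and $\Re s\ge0$, the point $\rho_\kappa+s\varpi_P$ lies in $T_B$, so only the rational prefactor can produce poles or zeros there.

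The third step is combinatorial. The roots of $N$ are grouped by $\langle\gamma,\varpi_{P}^{\vee}\rangle=j$, which gives the Langlands--Shahidi pieces $r_{j}$. For such a root, $\langle\rho_{\kappa}+s\varpi_{P},\gamma^{\vee}\rangle=js'+h$, where $h$ is the $\kappa$-weight (half the $\mathfrak{sl}_2$ eigenvalue) and $s'$ is $s$ after normalization. Each $V_{\ell}\subset r_{j}$ contributes the weights $h=-\tfrac{\ell-1}{2},\dots,\tfrac{\ell-1}{2}$ in integer steps, so its factors telescope:
\[
\prod_{h}\frac{js+h}{js+h-1}=\frac{js+\frac{\ell-1}{2}}{js-\frac{\ell+1}{2}}.
\]
This is exactly $\mathsf{Q}_{P}(\kappa,s)^{-1}$ restricted to the $V_\ell$ factor. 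Multiplying over all pieces gives $\mathsf{Q}_{P}E_{P}(\pi_{\kappa},s)=E^{*}_{B}(\rho_\kappa+s\varpi_P)$ up to a nonzero constant, which is holomorphic and nonzero.

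The main obstacle is the first step. One has to control the restriction to $\rho_\kappa+s\varpi_P$, a subvariety where many polar and zero hyperplanes of $D_B$ meet, and show that $E_B^*$ remains holomorphic and nonvanishing as an intertwining operator when restricted to a non-generic line through those intersections. "Nonzero" must hold on the whole induced representation $I_{P}^{G}(\pi_{\kappa},s)$, not only on the spherical vector. For that I would invoke the representation-theoretic form of the regularization from \S\ref{subsec:Program-Borel}, and I expect to need the method of \cite{Dattu-thesis} (the Keys--Shahidi type input) to rule out extra cancellation.
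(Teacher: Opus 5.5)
Your route is the one the paper points to: an explicit residue formula realizing $E_P(\pi_\kappa,s)$ as $D_{B\cap M}^{-1}E_B$ restricted to the line $\rho_\kappa+s\varpi_P$, then factoring $E_B=D_BE_B^*$, in the spirit of the method of \cite{Dattu-thesis}. Your telescoping over the $\kappa$-strings of each $r_j$ is also correct. The gap is the claim in your second step that $\rho_\kappa+s\varpi_P\in T_B$ whenever $\Re(s)\ge0$. This is false.

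We have $\langle\rho_\kappa+s\varpi_P,\alpha_P^\vee\rangle=s+\langle\rho_\kappa,\alpha_P^\vee\rangle$. Since $\rho_\kappa$ is $M$-dominant, lies in the span of $\Delta_M$, and is nonzero on each simple factor ($\kappa$ being distinguished), it is a positive combination of simple roots of $M$. Moreover $\langle\alpha,\alpha_P^\vee\rangle\le0$ for $\alpha\in\Delta_M$. Hence $c_\kappa:=-\langle\rho_\kappa,\alpha_P^\vee\rangle>0$ whenever $\alpha_P$ is linked to $\Delta_M$ in the Dynkin diagram, and your argument only reaches $\Re(s)\ge c_\kappa$.

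The strip $0\le\Re(s)<c_\kappa$ is not marginal. It contains the whole imaginary axis, including $s=0$, where the $\ell=1$ factors of $\mathsf{Q}_P$ encode zeros of $E_P$. It can also contain poles:
\begin{itemize}
\item For $SL(3)$ the line is $\alpha/2+s\varpi_\beta$ with $\langle\Lambda,\beta^\vee\rangle=s-\tfrac12$. It leaves $T_B$ for $\Re(s)<\tfrac12$; this is exactly the ``red path exits the positive cone'' of \S\ref{subsec:Critical-remarks-SL3}.
\item For $G_2$ along $S_1$, $\Lambda=\beta/2+z\varpi_\alpha$ has $\langle\Lambda,\alpha^\vee\rangle=z-\tfrac32$. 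The pole at $z=\tfrac12$ predicted by $\mathsf{Q}_P$ is the point $\beta_2$, where $\langle\beta_2,\alpha^\vee\rangle=-1$, so it lies outside $T_B$.
\end{itemize}

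In this strip the holomorphy and nonvanishing of $E_B^*$ on $T_B$ give you nothing, and $E_B^*$ is in fact not holomorphic near the line. In the $SL(3)$ case, $E_B(\Lambda)=c(s_\beta,\Lambda)E_B(s_\beta\Lambda)$ with $s_\beta\Lambda\in T_B$. The factor $\xi(\langle\Lambda,\beta^\vee\rangle)/\xi(1+\langle\Lambda,\beta^\vee\rangle)$ produces polar hyperplanes $\langle\Lambda,\beta^\vee\rangle=\varrho-1$, with $\xi(\varrho)=0$. These cut the line at $s=\varrho-\tfrac12$, which lies on $\Re(s)=0$ under RH. That the restriction to the line is nonetheless regular there is exactly the cancellation of \S\ref{subsec:Critical-remarks-SL3}. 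Your proof neither invokes it nor proves its general analogue, and nonvanishing in the strip is likewise unproved.

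What is missing is a separate argument for $0\le\Re(s)<c_\kappa$. One option:
\begin{itemize}
\item Write $C_BE_P(\pi_\kappa,s)$ as a sum of exponentials whose coefficients are the $M$-regularized residues of the $c(w,\cdot)$, restricted to the line.
\item Show that after multiplication by $\mathsf{Q}_P$ each coefficient is holomorphic for $\Re(s)\ge0$. Every denominator $\xi(1+\langle\Lambda,\delta^\vee\rangle)$ whose argument enters the critical strip must be cancelled by a numerator, as $\xi(1+\langle\Lambda,\beta^\vee\rangle)$ is cancelled by $\xi(\langle\Lambda,\gamma^\vee\rangle)$ on $S_\alpha$ for $SL(3)$.
\item Show that some coefficient is nonzero at each point.
\end{itemize}

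The obstacle you single out, restricting to a non-generic line inside $T_B$, is harmless, since $E_B^*$ is holomorphic on a neighbourhood of $T_B$. Also, nonvanishing of the operator already follows from nonvanishing on the spherical vector. If the statement is read for all of $\mathrm{Ind}_P^G(\pi_\kappa,s)$, what fails to transfer from the spherical vector is holomorphy, not nonvanishing.
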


\begin{rem*}
In the author's view, it is desirable to represent the poles of Eisenstein
series in formulas like the above. We have given several examples (see \S \ref{sec:new-results}). 
\end{rem*}

\subsection{\label{subsec:Program-general}The general case}

We now leave the Borel case behind and formulate the expectation for
an arbitrary parabolic subgroup.

\subsubsection{The Eisenstein series}

Fix a standard parabolic subgroup $P=N\rtimes M$ of $G$, not necessarily
maximal. Recall that 
\[
M(\mathbb{A})=M(\mathbb{A})^{1}\times A_{M},\quad A_{P}:=A_{M}\simeq(0,\infty)^{\text{co-rank}(M)}
\]
Let $\pi\subset L^{2}(A_{M}M(F)\backslash M(\mathbb{A}))$ be a cuspidal
(irreducible) automorphic representation of $M(\mathbb{A})$. Let
\[
H_{P}:G(\mathbb{A})\to\mathfrak{a}_{P}
\]
be the Harish-Chandra map. For $\Lambda\in\check{\mathfrak{a}}_{P,\mathbb{C}}$,
we have a representation 
\[
\mathbf{1}\otimes\pi\otimes e^{\langle\rho_{P}+\Lambda,H_{P}(\cdot)\rangle}
\]
of $P(\mathbb{A})=N(\mathbb{A})M(\mathbb{A})^{1}A_{M}$. Denote 
\[
\mathrm{Ind}_{P}^{G}(\pi,\Lambda):=\mathrm{Ind}_{P(\mathbb{A})}^{G(\mathbb{A})}\left(\mathbf{1}\otimes\pi\otimes e^{\langle\rho_{P}+\Lambda,H_{P}(\cdot)\rangle}\right)
\]
As usual, one realizes the induced representation $\mathrm{Ind}_{P}^{G}(\pi,\Lambda)$
on a space $\mathcal{A}_{P}(\pi)$ of automorphic forms on $N(\mathbb{A})M(F)\backslash G(\mathbb{A})$
(see \cite{Lapid_remark}, section 2), independent of $\Lambda$ (the
action of $G(\mathbb{A})$ depends on $\Lambda$). 

Cuspidal Eisenstein series 
\[
E_{P}(\pi):\check{\mathfrak{a}}_{P,\mathbb{C}}\to\text{Hom}_{\mathbb{C}}\left(\mathcal{A}_{P}(\pi),\mathcal{A}[G]\right),
\]
\[
E_{P}(\pi,\Lambda):=E_{P}(\pi)(\Lambda)\in\text{Hom}_{G(\mathbb{A})}\left(\mathrm{Ind}_{P}^{G}(\pi,\Lambda),\ \mathcal{A}[G]\right)
\]
is a meromorphic family of intertwining operators. For $\varphi\in\mathcal{A}_{P}(\pi)$,
we write 
\[
E_{P}(\varphi,\Lambda):=E_{P}(\pi)(\Lambda)(\varphi)
\]
to lighten the notation. For meromorphic continuation of these Eisenstein
series, see Bernstein-Lapid \cite{Lapid_Bernstein}. We take this
for granted. 

\subsubsection{The heuristic}

Spectral decomposition rests on deforming integrals of the form
\[
\int_{\sigma+i\check{\mathfrak{a}}_{P}}\widehat{\eta}(\Lambda)E_{P}(\varphi,\Lambda)d\Lambda,\quad\varphi\in\mathcal{A}_{P}(\pi),\ \eta\in C_{c}^{\infty}(A_{P})
\]
with $\sigma$ far in the positive cone in $\check{\mathfrak{a}}_{P}$,
where the Eisenstein series converges. Write $T_{P}$ for the tube
over the closed positive cone in $\check{\mathfrak{a}}_{P}$. 

Exactly as in the Borel case, we expect a factorization
\[
E_{P}(\pi,\Lambda)=D_{P}(\pi,\Lambda)E_{P}^{*}(\pi,\Lambda)
\]
in which $D_{P}(\pi,\Lambda)$ is a \emph{scalar} rational function
--- the rational \emph{Paley--Wiener distribution} --- recording
precisely the poles and zeros of $E_{P}$ that meet $T_{P}$, and
$E_{P}^{*}(\pi,\Lambda)$ is the regularized
Eisenstein series, holomorphic and nonzero as an intertwining operator
throughout. We have written $D_{P}(\pi,\Lambda)$ in two cases:
\begin{itemize}
\item $G$ a split semisimple group, $P=B$ a Borel subgroup, and $\pi=\mathbf{1}$
the trivial representation. 
\item $G=GL(n)$, $P$ a general parabolic subgroup, $\pi$ a general cuspidal
representation. 
\end{itemize}
The central heuristic is that, once $E_{P}(\pi,\Lambda)$ is written
in this form, the contour deformation is governed entirely by the
scalar factor $D_{P}(\pi,\Lambda)$. Concretely:
\begin{itemize}
\item The special points are detected by deforming the integral
\[
\int_{\sigma+i\check{\mathfrak{a}}_{P}}\widehat{\eta}(\Lambda)D_{P}(\pi,\Lambda)d\Lambda,\quad\eta\in C_{c}^{\infty}(A_{P})
\]
as in the examples. 
\item If $\rho_{P,\#}\in T_{P}$ is a point where the discrete spectrum
is supported, then the corresponding discrete spectrum is 
\[
E_{P}^{*}(\pi,\rho_{P,\#})\left(\mathrm{Ind}_{P}^{G}(\pi,\rho_{P,\#})\right)\subset L^{2}\left(G(F)\backslash G(\mathbb{A})\right).
\]
As $[P,\pi]$ ranges over all cuspidal data, these subspaces together
span the discrete spectrum of $L^{2}\left(G(F)\backslash G(\mathbb{A})\right)$.
\end{itemize}
This is the general-case analogue of the Borel picture: the residual
spectrum is, in every case, a regularization of a cuspidal Eisenstein
series at a finite list of distinguished points, and those points
are read off from a single scalar rational function.

\subsubsection{Why the regularization is the right object}

The rational function $D_{P}(\pi,\Lambda)$ above is likely to have
the same shape as the Borel and $GL(n)$ examples above. The parallel
is close enough to expect morphisms of the form
\[
\mathscr{L}_{F}\times SL(2,\mathbb{C})\to\widehat{M},\quad P=N\rtimes M
\]
to play a role in determining and describing where the discrete spectrum
is concentrated. The problem addressed by Langlands, being more fundamental
than that of Arthur's work (a prerequisite, really), might be the
true origin of many of the phenomena observed/conjectured by Arthur. 

Of all the Laurent coefficients we single out the regularization and
discard the rest. I give several reasons for believing this is the
correct object.
\begin{itemize}
\item \emph{Canonicity}. The residual spectrum cannot depend on the path
or the measures chosen to compute residues during the contour deformation.
Only $E_{P}^{*}(\pi,\Lambda)$ admits a description independent of
those choices; every other Laurent coefficient requires a choice of
coordinates.
\item \emph{Intertwining. }The representation-theoretic counterpart of a
Hecke eigenfunction is an intertwining operator. Only the regularization
$E_{P}^{*}(\pi,\Lambda)$ is an intertwining operator; the other Laurent
coefficients are not.
\item \emph{Consistency with the motivation.} The theorems of section \ref{sec:new-results}
on the poles of Eisenstein series can extend beyond the cases treated
there only if the non-cuspidal spectrum arises in exactly this way
--- as a regularization.
\item \emph{Meaningfulness. }A theorem of Franke \cite{Franke_firstPaper}
(Corollary 1 on p. 236) asserts that \emph{every} automorphic form
is a linear combination of Laurent coefficients of Eisenstein series
induced from cusp forms. In light of this, an assertion that \emph{square-integrable}
automorphic forms are \emph{some} combination of Laurent coefficients
satisfying Langlands' square-integrability criterion would be nearly
vacuous. 
\end{itemize}
\begin{rem}
The contour deformation requires certain analytic justifications ---
a special cancellation illustrated for $SL(3)$ in \ref{subsec:Critical-remarks-SL3}
--- and these place \emph{sharp constraints} on where the poles of
$E_{P}(\pi,\Lambda)$ can lie for $\pi$ cuspidal. I have found no
evidence that Langlands recognized this issue in the original notes
\cite{Langlands-Spectral}, for it seems to require his \emph{later
}work \emph{Euler products} \cite{Langlands-EulerProduct}. 
\end{rem}

\bigskip{}

\subsubsection*{Coda}

It is my hope that pursuing this picture will prove fruitful, both
as a route to open problems in the theory of automorphic forms and
as a source of new ones. I end with a quote of Weil that I think is particularly appropriate for the theory of automorphic forms: 
\begin{quotation}
``[With the explosion of literature in mathematics, does it]
mean that mathematics is becoming a science for erudites, and that
it will no longer be possible to do creative work in mathematics until
one has grown gray in the harness, and exhausted from burning the
midnight oil for many years in the company of dusty tomes? \emph{This
would at the same time be a sign of its decline};

...

Therefore, if mathematics is to continue to exist in the way in which
it has manifested itself to its votaries until now, the technical
complications with which more than one of its subjects is now studded,
must be superficial or of only temporary character; \emph{in the future,
as in the past, the great ideas must be simplifying ideas}, the creator
must always be one who clarifies, for himself and for others, the
most complicated tissues of formulas and concepts. ''

--- Andr\'e Weil, The future of Mathematics, 1950. Emphasis added. 
\end{quotation}
\bibliographystyle{plain}
\bibliography{references}

\end{document}